\theoremstyle{plain}
\theoremstyle{definition}
\newtheorem{theorem}{Theorem}[section]
\newtheorem{thm}[theorem]{Theorem}
\newtheorem{lem}[theorem]{Lemma}
\newtheorem{defn}{Definition}[section]
\begin{document}
	

	%
	\title{Graphic Elementary Lift of Cographic Matroids}\maketitle
	
	\markboth{ Shital Dilip Solanki, Ganesh Mundhe and S. B. Dhotre}{Graphic Elementary Lift of Cographic Matroids }\begin{center}\begin{large} Shital Dilip Solanki$^1$, Ganesh Mundhe$^2$ and S. B. Dhotre$^3$ \end{large}\\\begin{small}\vskip.1in\emph{
				1. Ajeenkya DY Patil University, Pune-411047, Maharashtra,
				India.\\ 
				2. Army Institute of Technology, Pune-411015, Maharashtra,
				India.\\
				3. Department of Mathematics,
				Savitribai Phule Pune University,\\ Pune - 411007, Maharashtra,
				India.}\\
			E-mail: \texttt{1. shital.solanki@adypu.edu.in, 2. gmundhe@aitpune.edu.in, 3. dsantosh2@yahoo.co.in. }\end{small}\end{center}\vskip.2in

	\begin{abstract} 
	 A matroid $N$ is a lift of a binary matroid $M$, if $N=Q\backslash X$ when $Q/X=M$ for some binary matroid $Q$ and $X \subseteq E(Q)$ and is called an elementary lift of $M$, if $|X|=1$. A splitting operation on a binary matroid can result in an elementary lift. An elementary lift of a cographic or a graphic matroid need not be cographic or graphic. We intend to characterize the cographic matroids whose elementary lift is a graphic matroid.   
	\end{abstract}\vskip.2in
	
	\noindent\begin{Small}\textbf{Mathematics Subject Classification (2010)}:
		05C83, 05C50, 05B35    \\\textbf{Keywords}: Elementary Lift, Graphic, Cographic, Minor, Quotient, Splitting. \end{Small}\vskip.2in
	\vskip.25in

	\baselineskip 19truept 
	\section{Introduction}
	\noindent Oxley \cite{ox} to be referred for vague concepts and notations. 
	For a matroid $M$, if there is matroid $N$ such that $N=Q\backslash X$ if $Q/X=M$ for some binary matroid $Q$ and $X\subseteq E(Q)$, then $N$ is called lift of $M$ and is called an elementary lift if $|X|=1$. The splitting operation results in an elementary lift. The splitting operation in the graph was introduced by Fleischner \cite{fl}. Later, Raghunathan et al. \cite{ttr} defines splitting for binary matroids. Splitting is then generalized using a set by Shikare et al. \cite{mms1} as follows.
	\begin{defn} 
		Let a binary matroid $M$ represented by a matrix $A$. Append a row at the bottom of $A$ with entries 1 corresponding to the elements of $S$ and 0 everywhere else, where $S\subseteq E(M)$. Let the matrix be $A_S$. Then $M_S= M(A_S)$ is the splitting matroid, and the operation is called the splitting operation using set $S$.  
	\end{defn}
  The matroid $B_S$ need not be cographic or graphic for a cographic binary matroid $B$. Thus, the splitting operation does not protect matroid properties like graphicness, cographicness, etc. N. Pirouz \cite{np_ctg} characterized a cographic matroid whose splitting using two elements is graphic. In the following theorem, Ganesh et al. \cite{gm} characterized graphic matroid whose splitting matroid, using three elements, is graphic.
\begin{thm}\cite{gm}\label{gtg3thm}
	Let $S \subseteq E(M)$, with $|S|=3$, where $M$ is a graphic binary matroid, then $M_S$ is graphic if and only if the matroid $M$ do not have minors $M(F_i)$, where the Figure \ref{g2g}, shows the graph $F_i$, for $i=1,2 \cdots 7$. 
\end{thm}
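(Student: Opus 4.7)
The plan is to use Tutte's excluded-minor characterization of binary graphic matroids: a binary matroid is graphic if and only if it contains no minor isomorphic to $F_7$, $F_7^*$, $M^*(K_5)$, or $M^*(K_{3,3})$. Since the splitting operation, defined via appending a row to a binary matrix, produces a binary matroid, the problem reduces to determining precisely when $M_S$ avoids these four forbidden minors, under the hypothesis that $M$ itself is graphic.

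For the necessity direction (the ``only if'' part), I would, for each graph $F_i$ in Figure \ref{g2g}, compute the splitting matroid $M(F_i)_S$ explicitly from the matrix definition, choosing the appropriate three-element subset $S$ indicated by the figure. The task is then to exhibit in each $M(F_i)_S$ at least one of the matroids $F_7, F_7^*, M^*(K_5), M^*(K_{3,3})$ as a minor. To propagate this to an arbitrary $M$ having $M(F_i)$ as a minor, I would first establish (or quote) a compatibility lemma: deleting or contracting elements of $E(M)\setminus S$ in $M_S$ corresponds, up to tracking what happens to the appended row, to first taking the analogous minor of $M$ and then splitting by the induced subset. This ensures any $M(F_i)$-minor in $M$ induces a non-graphic minor in $M_S$.

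For sufficiency, I would argue contrapositively. Suppose $M$ is graphic but $M_S$ is non-graphic; then $M_S$ contains some $N \in \{F_7, F_7^*, M^*(K_5), M^*(K_{3,3})\}$ as a minor. Pulling $N$ back along the compatibility lemma yields a minor $M'$ of $M$ of bounded size, together with a three-element subset $S' \subseteq E(M')$ for which $(M')_{S'} \cong N$ (or contains $N$). Since minors of graphic matroids are graphic, $M' = M(H)$ for some graph $H$. The remaining work is a finite, case-by-case classification: for each of the four candidates $N$ and each orbit of three-element subsets of $E(H)$ (up to $\mathrm{Aut}(H)$), enumerate the graphs $H$ for which $M(H)_{S'} \supseteq N$, and check that after reducing to minor-minimal such $H$ one obtains exactly the seven graphs $F_1, \dots, F_7$.

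The main obstacle will be this sufficiency case analysis: one must show the list of seven obstructions is \emph{complete}, not merely sufficient. Controlling the size of $H$ (via rank and corank bounds inherited from $N$) is crucial to keep the search finite, and isomorphism tests, together with minor-containments among the seven candidates themselves, have to be executed carefully so that no obstruction is duplicated or overlooked. A secondary technical point is the compatibility lemma above, since the appended row of $A_S$ is affected nontrivially by pivoting during contraction, and a clean statement relating $(M/e)_{S\setminus e}$ and $(M_S)/e$ (and dually for deletion) is what makes both directions of the proof go through cleanly.
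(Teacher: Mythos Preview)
The theorem you are attempting to prove is not proved in this paper at all: it is quoted, with the citation \cite{gm}, as a known result of Mundhe, Borse and Dalvi, and the present paper offers no argument for it. So there is no ``paper's own proof'' to compare against here.

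That said, your outline is broadly consistent with the machinery that \cite{gm} develops and that this paper reuses (see Lemmas~\ref{mainlemma} and~\ref{rel}, each proved ``on a similar line'' to the corresponding lemma in \cite{gm}). The key organizing idea in \cite{gm}, which your sketch does not quite isolate, is to pass through \emph{elementary quotients}: rather than directly classifying all small graphs $H$ with $M(H)_{S'}$ containing a forbidden minor $F$, one shows that any minor-minimal obstruction $P$ is either a graphic quotient $Q_F$ of some $F\in\{F_7,F_7^*,M^*(K_5),M^*(K_{3,3})\}$ or a coextension of such a quotient by at most $|S|$ elements. The graphic quotients $Q_F$ are then enumerated once and for all (they are the graphs $Q_1,\dots,Q_9$ of Figure~\ref{quotient}), and the obstruction list $F_1,\dots,F_7$ is extracted from these and their small coextensions. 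Your ``pull back $N$ and bound the size of $H$'' step is in the right spirit, but the quotient/coextension framework is what makes the finite search genuinely tractable and is the substantive content you would need to supply.
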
  
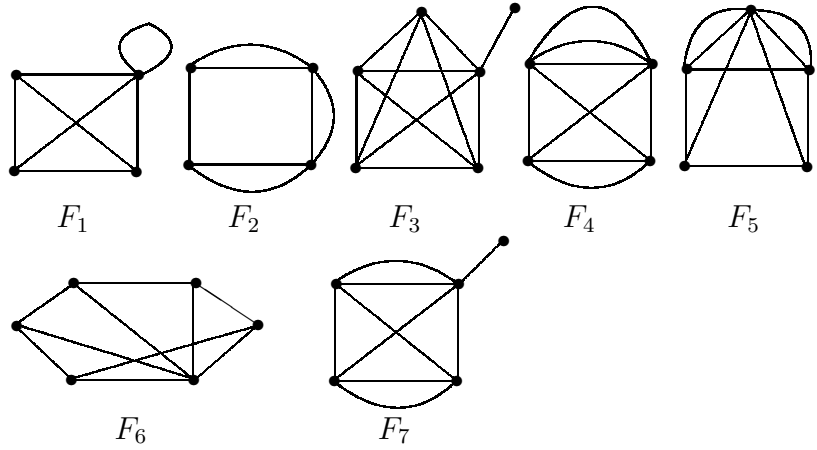
\begin{figure}[h!]
	\centering
\unitlength 1mm 
\linethickness{0.4pt}
\ifx\plotpoint\undefined\newsavebox{\plotpoint}\fi 
\begin{picture}(107.35,64.642)(0,0)
	\put(24.924,36.199){\circle*{1.514}}
	\put(41.006,36.17){\circle*{1.514}}
	\put(25.216,49.028){\circle*{1.514}}
	\put(41.298,48.999){\circle*{1.514}}
	\put(25.088,49.033){\line(1,0){16.2}}
	\put(25.021,49.029){\line(0,-1){12.713}}
	\put(41.109,36.092){\line(0,1){13.162}}
	\qbezier(24.924,49.253)(33.343,55.119)(41.385,48.875)
	\put(25.113,36.199){\line(1,0){16.271}}
	\qbezier(24.735,36.199)(33.628,29.009)(41.006,36.199)
	\put(90.189,35.99){\circle*{1.514}}
	\put(106.271,35.96){\circle*{1.514}}
	\put(90.48,48.819){\circle*{1.514}}
	\put(106.562,48.79){\circle*{1.514}}
	\put(90.353,48.824){\line(1,0){16.2}}
	\put(90.286,48.821){\line(0,-1){12.712}}
	\put(106.373,35.883){\line(0,1){13.163}}
	\put(90.378,35.99){\line(1,0){16.271}}
	\put(98.885,56.715){\circle*{1.514}}
	\multiput(98.931,56.895)(.033596413,-.036){223}{\line(0,-1){.036}}
	\multiput(90.268,48.919)(.036548523,.033700422){237}{\line(1,0){.036548523}}
	\multiput(90.043,36.094)(.0337078652,.0779475655){267}{\line(0,1){.0779475655}}
	\put(1.974,35.315){\circle*{1.514}}
	\put(18.056,35.286){\circle*{1.514}}
	\put(2.265,48.144){\circle*{1.514}}
	\put(18.348,48.115){\circle*{1.514}}
	\put(2.138,48.149){\line(1,0){16.2}}
	\put(2.071,48.146){\line(0,-1){12.712}}
	\put(18.159,35.208){\line(0,1){13.163}}
	\put(2.163,35.315){\line(1,0){16.271}}
	\multiput(2.068,48.181)(.0418534031,-.0336806283){382}{\line(1,0){.0418534031}}
	\multiput(1.785,35.315)(.0433376963,.0336806283){382}{\line(1,0){.0433376963}}
	\multiput(98.575,56.925)(.033597285,-.094683258){221}{\line(0,-1){.094683258}}
	\qbezier(18.092,48.375)(12.805,52.763)(19.667,54.9)
	\qbezier(19.667,54.9)(26.192,51.638)(18.317,47.925)
	\qbezier(41.267,48.825)(46.779,42.525)(41.042,36.225)
	\qbezier(90.025,48.825)(90.925,57.937)(99.025,56.7)
	\qbezier(99.025,56.7)(107.35,56.587)(106.675,48.825)
	\put(32.2,29){\makebox(0,0)[cc]{$F_2$}}
	\put(53.35,29){\makebox(0,0)[cc]{$F_3$}}
	\put(76.3,29){\makebox(0,0)[cc]{$F_4$}}
	\put(97.9,29){\makebox(0,0)[cc]{$F_5$}}
	\put(9.507,7.677){\circle*{1.514}}
	\put(25.589,7.648){\circle*{1.514}}
	\put(9.798,20.506){\circle*{1.514}}
	\put(25.881,20.477){\circle*{1.514}}
	\put(9.671,20.511){\line(1,0){16.2}}
	\put(9.696,7.677){\line(1,0){16.271}}
	\multiput(9.601,20.543)(.0418534031,-.0336806283){382}{\line(1,0){.0418534031}}
	\put(2.307,14.857){\circle*{1.514}}
	\put(34.032,14.969){\circle*{1.514}}
	\multiput(25.738,7.687)(.038364055,.033700461){217}{\line(1,0){.038364055}}
	\put(34.063,15){\line(-3,2){8.438}}
	\multiput(2.225,15.112)(.048006369,.033681529){157}{\line(1,0){.048006369}}
	\multiput(2.112,14.775)(.035328502,-.033695652){207}{\line(1,0){.035328502}}
	\multiput(2.037,15.025)(.109345794,-.03364486){214}{\line(1,0){.109345794}}
	\put(25.437,7.825){\line(0,1){13.05}}
	\multiput(9.462,7.825)(.117391304,.033695652){207}{\line(1,0){.117391304}}
	\put(17.337,1){\makebox(0,0)[cc]{$F_6$}}
	\put(44.067,7.544){\circle*{1.514}}
	\put(60.149,7.515){\circle*{1.514}}
	\put(44.358,20.373){\circle*{1.514}}
	\put(60.441,20.344){\circle*{1.514}}
	\put(44.231,20.378){\line(1,0){16.2}}
	\put(44.164,20.375){\line(0,-1){12.712}}
	\put(60.252,7.437){\line(0,1){13.163}}
	\qbezier(44.067,20.599)(52.486,26.464)(60.528,20.221)
	\put(44.256,7.544){\line(1,0){16.271}}
	\qbezier(43.878,7.544)(52.771,.354)(60.149,7.544)
	\multiput(44.161,20.41)(.0418534031,-.0336806283){382}{\line(1,0){.0418534031}}
	\multiput(43.878,7.544)(.0433376963,.0336806283){382}{\line(1,0){.0433376963}}
	\put(9.723,29){\makebox(0,0)[cc]{$F_1$}}
	\put(51.972,1){\makebox(0,0)[cc]{$F_7$}}
	\put(66.341,26.036){\circle*{1.514}}
	\multiput(60.458,20.329)(.033716763,.033722543){173}{\line(0,1){.033722543}}
	\put(69.54,36.711){\circle*{1.514}}
	\put(85.622,36.682){\circle*{1.514}}
	\put(69.831,49.54){\circle*{1.514}}
	\put(85.914,49.511){\circle*{1.514}}
	\put(69.704,49.545){\line(1,0){16.2}}
	\put(69.637,49.542){\line(0,-1){12.712}}
	\put(85.725,36.604){\line(0,1){13.163}}
	\qbezier(69.54,49.766)(77.959,55.631)(86.001,49.388)
	\put(69.729,36.711){\line(1,0){16.271}}
	\qbezier(69.351,36.711)(78.244,29.521)(85.622,36.711)
	\multiput(69.634,49.577)(.0418534031,-.0336806283){382}{\line(1,0){.0418534031}}
	\multiput(69.351,36.711)(.0433376963,.0336806283){382}{\line(1,0){.0433376963}}
	\put(46.895,35.744){\circle*{1.514}}
	\put(62.977,35.714){\circle*{1.514}}
	\put(47.186,48.573){\circle*{1.514}}
	\put(63.269,48.544){\circle*{1.514}}
	\put(47.059,48.578){\line(1,0){16.2}}
	\put(46.992,48.575){\line(0,-1){12.712}}
	\put(63.08,35.637){\line(0,1){13.163}}
	\put(47.084,35.744){\line(1,0){16.271}}
	\put(55.592,56.469){\circle*{1.514}}
	\multiput(55.637,56.649)(.033596413,-.036){223}{\line(0,-1){.036}}
	\multiput(46.974,48.673)(.036552743,.033700422){237}{\line(1,0){.036552743}}
	\multiput(46.749,35.848)(.0337078652,.0779475655){267}{\line(0,1){.0779475655}}
	\multiput(55.363,56.454)(.033597285,-.093665158){221}{\line(0,-1){.093665158}}
	\multiput(62.788,35.754)(-.0419291339,.0336614173){381}{\line(-1,0){.0419291339}}
	\multiput(46.813,35.979)(.0439171123,.0336898396){374}{\line(1,0){.0439171123}}
	\put(67.858,57.002){\circle*{1.514}}
	\multiput(63.212,48.644)(.033547445,.061934307){137}{\line(0,1){.061934307}}
	\qbezier(69.4,49.704)(78.504,64.642)(85.84,49.527)
\end{picture}

	\caption{Excluded minors for the splitting of a graphic matroid using three elements.}
	\label{g2g}
\end{figure}

\noindent Let $\mathcal{C}_k$ be the collection of cographic matroid whose splitting using $k$ elements is graphic. It is observed that there is no minimal minor  $E$ such that $E \notin \mathcal{C}_1$.

\noindent N. Pirouz \cite{np_ctg} characterized the class $\mathcal{C}_2$.
\begin{thm}\cite{np_ctg}\label{ctg2el}
	Let $C$ be a cographic binary matroid, then $C \in \mathcal{C}_2$ if and only if it does not have $M(G_1)$ or $M(G_2)$ minor, Figure \ref{Fig_gtg_2elt} shows the graphs $G_1$ and $G_2$.
\end{thm}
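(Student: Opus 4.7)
The plan is to prove the biconditional using Tutte's excluded-minor characterization of binary graphic matroids together with a direct analysis of how the splitting operation interacts with matroid minors. Throughout I work with the matrix description: $C_S$ is obtained from a representing matrix of $C$ by appending a row that is $1$ on $S$ and $0$ elsewhere, so $C_S$ is binary whenever $C$ is.

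For the necessity direction, I would first establish an auxiliary compatibility lemma: if $N$ is a minor of a binary matroid $M$ obtained by contracting and deleting elements outside $S$, then $N_S$ is a minor of $M_S$. This reduces the forward implication to a finite check on $M(G_1)$ and $M(G_2)$ alone. For each $G_i$, and for each orbit representative of a two-element subset $S \subseteq E(M(G_i))$ under the automorphism group of $G_i$, I compute the splitting $\bigl(M(G_i)\bigr)_S$ from its matrix and exhibit inside it one of the binary excluded minors for graphic matroids, namely $F_7$, $F_7^*$, $M^*(K_5)$, or $M^*(K_{3,3})$. This shows $\bigl(M(G_i)\bigr)_S$ is never graphic, and by the compatibility lemma the same is true of $C_S$ whenever $M(G_i)$ sits as a minor of $C$.

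For the sufficiency direction I argue by contrapositive: assume $C_S$ is not graphic for some $S$ with $|S|=2$ and produce an $M(G_1)$ or $M(G_2)$ minor in $C$. By Tutte's theorem applied to the binary matroid $C_S$, there is a minor $N \cong F_7,\,F_7^*,\,M^*(K_5)$, or $M^*(K_{3,3})$. I then lift $N$ back through the splitting: any minor of $C_S$ arises from a minor $C'$ of $C$ together with a decision, for each element of $S \cap E(C')$, of whether to delete or contract it relative to the appended row. A careful case analysis on how the two elements of $S$ behave (retained, deleted, or contracted; both, one, or neither lying in $E(C')$) gives a bounded list of binary matroid candidates $C'$ of size at most $|E(N)|+2$. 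Combined with the standing hypothesis that $C$ is cographic, this restricts $C'$ to a short finite list of cographic matroids which can be enumerated by their planar/cographic representatives; each candidate is then checked to contain $M(G_1)$ or $M(G_2)$ as a minor, unless it already fails to produce the claimed $N$ under the splitting, giving a contradiction.

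The main obstacle will be the enumeration step at the end of the sufficiency argument: the four possibilities for $N$ combine with the positional choices of $S$ to produce many subcases, and for each subcase one must rule out all cographic lifts $C'$ except those actually witnessing $M(G_1)$ or $M(G_2)$. The bookkeeping is delicate because of duality, so I would organize it by fixing a representation $C = M^*(G)$ and translating each excluded-minor scenario for $C_S$ into an explicit local condition on the graph $G$, which makes the appearance of $G_1$ or $G_2$ as a topological subgraph of $G$ visible and keeps overlapping subcases from being counted twice.
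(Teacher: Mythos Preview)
This theorem is not proved in the present paper at all; it is quoted verbatim from Pirouz~\cite{np_ctg} as a known input and is used only to feed the single-element extensions in the $k=3$ analysis. There is therefore no in-paper argument against which to compare your proposal.

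That said, your outline is compatible in spirit with the machinery the paper does develop for general~$k$. Lemmas~\ref{mainlemma} and~\ref{rel} formalize precisely the ``lift the excluded minor back through the splitting'' step you sketch: a non-graphic $C_S$ yields a minor $P$ of $C$ with $P_S\cong F$ or $P_S/S'\cong F$ for some $F\in\mathcal F$, and $P$ is then recognized as an elementary quotient $Q_F$ or a short coextension thereof. Specializing that template to $k=2$ and running through the graphic quotients $M(Q_1),\dots,M(Q_9)$ (together with the observation that the non-graphic quotients $M^*(K_5)$ and $M^*(K_{3,3})$ already contain $M(Q_1)$ and $M(Q_2)$) would recover Pirouz's list more systematically than the ad~hoc enumeration of bounded-size cographic lifts $C'$ you propose; your graph-side translation via $C=M^*(G)$ would work but carries more bookkeeping.

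One small remark on your necessity direction: checking that $(M(G_i))_S$ is non-graphic for \emph{every} two-element $S$ is more than you need. Since $\mathcal C_2$ is minor-closed within cographic matroids (your compatibility lemma gives exactly this), a single witnessing $S\subseteq E(M(G_i))$ with $(M(G_i))_S$ non-graphic already forces any $C$ with an $M(G_i)$ minor out of $\mathcal C_2$.
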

\begin{figure}[h!]
	\centering
	\unitlength 1mm 
	\linethickness{0.4pt}
	\ifx\plotpoint\undefined\newsavebox{\plotpoint}\fi 
	\begin{picture}(47.194,31.693)(0,0)
		\put(3.569,8.056){\circle*{1.682}}
		\put(21.438,8.024){\circle*{1.682}}
		\put(3.893,22.31){\circle*{1.682}}
		\put(21.762,22.278){\circle*{1.682}}
		\put(3.751,22.316){\line(1,0){18}}
		\put(3.677,22.312){\line(0,-1){14.125}}
		\put(21.552,7.937){\line(0,1){14.625}}
		\qbezier(3.569,22.561)(12.924,29.078)(21.859,22.141)
		\put(3.779,8.056){\line(1,0){18.079}}
		\qbezier(3.359,8.056)(13.24,.067)(21.438,8.056)
		\multiput(3.674,22.351)(.0418962264,-.0337146226){424}{\line(1,0){.0418962264}}
		\multiput(3.359,8.056)(.043384434,.0337146226){424}{\line(1,0){.043384434}}
		\put(28.16,7.824){\circle*{1.682}}
		\put(46.029,7.791){\circle*{1.682}}
		\put(28.484,22.078){\circle*{1.682}}
		\put(46.353,22.046){\circle*{1.682}}
		\put(28.342,22.084){\line(1,0){18}}
		\put(28.268,22.08){\line(0,-1){14.125}}
		\put(46.143,7.705){\line(0,1){14.625}}
		\put(28.37,7.824){\line(1,0){18.079}}
		\qbezier(27.95,7.824)(37.83,-.165)(46.029,7.824)
		\put(37.823,30.852){\circle*{1.682}}
		\multiput(37.874,31.052)(.0337004049,-.0361133603){247}{\line(0,-1){.0361133603}}
		\multiput(28.248,22.189)(.0364583333,.0336174242){264}{\line(1,0){.0364583333}}
		\multiput(27.998,7.939)(.0336700337,.0778619529){297}{\line(0,1){.0778619529}}
		\put(13.258,0){\makebox(0,0)[cc]{$G_1$}}
		\put(37.3,0.0){\makebox(0,0)[cc]{$G_2$}}
	\end{picture}
	\caption{Minimal minors not in the class $\mathcal{C}_2$}.
	\label{Fig_gtg_2elt}
\end{figure}
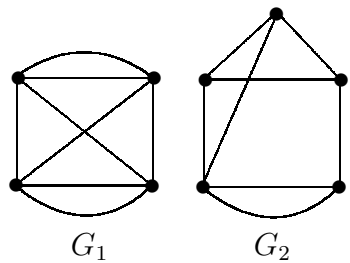

\noindent This paper proves the following theorems. 
\begin{thm}\label{mt1}
A cographic binary matroid $M \notin \mathcal{C}_k$, $k \geq 2$, then $M$ contains a minor $P$ such that one of the below is true.\\
i) $P$ is an extension of a minimal minor $E$ such that $E \notin \mathcal{C}_{k-1}$ by single element. \\
ii)  $P= M(Q_i)$.\\
iii) $P$ is a coextension of $M(Q_i)$ by $n$ elements, where $n \leq k$, the Figure \ref{quotient} shows the graph $Q_i$, for $i=1,2, \cdots 9$.    

\end{thm}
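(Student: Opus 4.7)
I would prove this by induction on $k \ge 2$, with Theorem \ref{ctg2el} as the base case (its two minimal minors $M(G_1), M(G_2)$ instantiate the role of the graphs $Q_i$ at level $k=2$). For the inductive step, let $M$ be cographic with $M \notin \mathcal{C}_k$, so there exists a set $S \subseteq E(M)$ with $|S|=k$ for which the splitting matroid $M_S$ is non-graphic. By Tutte's excluded-minor characterization of binary graphic matroids, $M_S$ must contain one of $F_7$, $F_7^*$, $M^*(K_5)$, $M^*(K_{3,3})$ as a minor. The entire proof reduces to analysing how such a forbidden minor of $M_S$ pulls back to minor-plus-splitting data on $M$.

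The core technical lemma I would establish first is the compatibility of the splitting operation with the minor operations: every minor of $M_S$ can be written as $(M')_{S'}$ for some minor $M'$ of $M$ and some subset $S' \subseteq S$, and deleting an element of $S$ in $M_S$ strictly reduces $|S'|$. Applying this to the Tutte-forbidden minor inside $M_S$ produces a minor $M' \le M$ together with a set $S' \subseteq S$ such that $(M')_{S'}$ is non-graphic, and in particular $M' \notin \mathcal{C}_{|S'|}$.

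If $|S'| < k$, the inductive hypothesis applied to $M'$ at level $|S'| \le k-1$ yields a minor of $M'$ of one of the three types in the theorem statement. Lifting this structure back up to $M$ along the elements removed from $S$ produces the desired minor $P$ of $M$: either $P$ is an extension of a minimal $(k-1)$-obstruction by one element (case (i)), or it is a coextension of some $M(Q_i)$ whose coextension size grows by at most one at each inductive step, hence remains bounded by $k$ overall (case (iii)). This is where the bound $n \le k$ in case (iii) comes from.

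The terminal case is when every pullback of a non-graphic minor of $M_S$ genuinely requires the full $k$-element splitting set. Writing $M = M^*(G)$, the set $S$ corresponds to $k$ edges of $G$ and the appended row imposes a parity condition on the cycles meeting $S$. A direct combinatorial analysis then forces $G$ to contain, as a graph minor (possibly with at most $k$ additional edges coming from coextension elements dual to $S$), one of $Q_1, \ldots, Q_9$, yielding case (ii) when no extra elements are needed and case (iii) otherwise. The \emph{main obstacle} is completeness and minimality of the list $Q_1, \ldots, Q_9$: one must verify, by a finite but intricate case analysis organised by which of the four Tutte excluded minors appears in $M_S$ and by how $S$ intersects the cocircuits of $M^*(G)$, that the nine graphs in Figure \ref{quotient} exhaust all minimal graphic obstructions arising from this terminal case.
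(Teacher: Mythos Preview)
Your high-level plan --- pull a Tutte obstruction $F \in \{F_7, F_7^*, M^*(K_5), M^*(K_{3,3})\}$ back from $M_S$ to a minor of $M$ --- is correct, but you miss the structural lemma that makes the argument work and that explains where the list $Q_1,\dots,Q_9$ actually comes from. The paper does not proceed by induction on $k$, nor by a combinatorial analysis of the graph $G$ with $M=M^*(G)$. Instead it uses the lift/quotient duality built into splitting: there is a binary matroid $N$ with an element $a$ such that $N\backslash a \cong P_S$ and $N/a \cong P$, so $P$ is an \emph{elementary quotient} of $P_S$. When $P_S\cong F$ this gives $P\cong Q_F$ directly, and when $P_S/S'\cong F$ one gets $P/S'\cong Q_F$, i.e.\ $P$ is a coextension of $Q_F$ by $|S'|\le k$ elements; this is exactly Lemma~\ref{rel} and is the source of the bound $n\le k$. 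The graphs $Q_1,\dots,Q_9$ are then nothing more than the complete list of \emph{graphic} elementary quotients of the four matroids in $\mathcal{F}$, already computed in \cite{gm} and quoted as Lemmas~\ref{qF7*}--\ref{gqm*k5}; no fresh case analysis on $G$ is required, and the ``main obstacle'' you flag does not exist.

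Your proposal also omits a step that the paper treats explicitly: the elementary quotient $Q_F$ need not be graphic. Lemmas~\ref{ngqF7}--\ref{qM*K33} show that the only non-graphic quotients not containing $F_7$ or $F_7^*$ are $M^*(K_5)$ and $M^*(K_{3,3})$ themselves, and the proof then checks that each of these already contains some $M(Q_i)$ as a minor, so they are discarded. Finally, your inductive reduction when $|S'|<k$ is not sound as written: the inductive hypothesis would hand you an extension of a minimal obstruction for $\mathcal{C}_{|S'|-1}$, not for $\mathcal{C}_{k-1}$, and ``lifting along removed elements of $S$'' does not turn one into the other. The paper sidesteps this entirely by invoking Lemma~\ref{mainlemma}, which directly outputs either the quotient situation above or a one-element extension of a minimal $E\notin\mathcal{C}_{k-1}$.
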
 

\noindent We show that the forbidden minors obtained by Mundhe et al. \cite{gm} are the only minimal minors not in the class $\mathcal{C}_3$.
\begin{thm}\label{mt2}
	Let a cographic binary matroid be $M$, then $M \in \mathcal{C}_3$ if and only if $M$ does not have a minor $M(F_i)$, Figure \ref{g2g} shows the graph $F_i$ for $i=1,2, \cdots 7$.
\end{thm}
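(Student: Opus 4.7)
The plan is to prove both directions and to lean on the existing classifications. For the necessity direction, suppose $M$ is cographic and contains $M(F_i)$ as a minor for some $i \in \{1,\ldots,7\}$. By Theorem~\ref{gtg3thm} there is a 3-element set $S_i \subseteq E(M(F_i))$ with $M(F_i)_{S_i}$ non-graphic. Using that the splitting operation commutes with the minor operation (for deletions and contractions taken disjointly from $S$), I would lift $S_i$ to a 3-element set $S \subseteq E(M)$ for which $M_S$ has $M(F_i)_{S_i}$ as a minor; since the class of graphic matroids is minor-closed, $M_S$ is not graphic and therefore $M \notin \mathcal{C}_3$.

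For sufficiency I would argue the contrapositive: assuming $M \notin \mathcal{C}_3$ I must produce some $M(F_i)$ as a minor. Applying Theorem~\ref{mt1} with $k=3$, combined with Theorem~\ref{ctg2el} which identifies $M(G_1)$ and $M(G_2)$ as the only minimal minors outside $\mathcal{C}_2$, there is a minor $P$ of $M$ falling into one of three types: (i) $P$ is a single-element binary extension of $M(G_1)$ or $M(G_2)$; (ii) $P = M(Q_j)$ for some $j \in \{1,\ldots,9\}$; or (iii) $P$ is a binary coextension of $M(Q_j)$ by at most three elements. Since $M$ is cographic and minors of cographic matroids are cographic, $P$ itself must be cographic, which sharply prunes the admissible extensions and coextensions.

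The bulk of the work is to verify that every such $P$ contains some $M(F_i)$ as a minor. Case (ii) is a direct finite check on the nine graphs of Figure~\ref{quotient}: for each $Q_j$ I would exhibit an explicit sequence of edge deletions and contractions producing one of the $F_i$. Cases (i) and (iii) are handled at the level of binary representations — an extension adds a column, a coextension appends a row — and, up to isomorphism together with the cographic constraint, reduce to a finite list. For each survivor on that list I would again produce an $M(F_i)$ minor by an explicit minor computation. The main obstacle is the combinatorial case analysis in (i) and (iii): keeping the enumeration provably complete, discarding the many non-cographic candidates at the representation level, and exhibiting compact forbidden-minor witnesses for each survivor is the technical core of the argument.
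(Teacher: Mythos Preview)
Your proposal is correct and follows the paper's overall strategy: reduce via Theorem~\ref{mt1} (equivalently, Lemmas~\ref{mainlemma} and~\ref{rel}) together with Theorem~\ref{ctg2el} to a minor $P$ that is either a single-element extension of $M(G_1)$ or $M(G_2)$, or one of the $M(Q_j)$, or a coextension of some $M(Q_j)$ by at most three elements, and then exhibit an $M(F_i)$ minor in each such $P$.

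Where you diverge from the paper is in the amount of work you plan to do. The paper dispatches your case~(i) in one line (every binary single-element extension of $M(G_1)$ is isomorphic to $M(F_4)$ or $M(F_7)$ or contains $M(F_1)$ or $M(F_2)$; and $M(G_2)$ already contains $M(F_2)$), and it does \emph{no} enumeration at all for your case~(iii): since the $Q_j$ are exactly the graphic quotients of the members of $\mathcal{F}$, the analysis of their binary coextensions by at most three elements is literally the computation already carried out in \cite{gm} to prove Theorem~\ref{gtg3thm}, and that computation does not depend on whether the ambient matroid $M$ is graphic or cographic. So the paper simply cites Theorem~\ref{gtg3thm} for case~(iii). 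The only new ingredient in the paper's proof is the observation (already absorbed into Theorem~\ref{mt1}) that the non-graphic quotients $M^*(K_5)$ and $M^*(K_{3,3})$ contain $M(F_1)$ and $M(F_2)$ respectively, so nothing beyond the graphic-quotient list arises. In short, your plan is sound, but you can replace the large enumeration you anticipate in case~(iii) by a one-line appeal to Theorem~\ref{gtg3thm}.
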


\section{Preliminary Results}
\noindent We denote $\mathcal{F}=\{F_7^*, M^*(K_{3,3}), F_7, M^*(K_5)\}$. An elementary quotient of $F \in \mathcal{F}$ is denoted by $Q_F$. 
\begin{thm}\cite{ox}\label{gt}
	A binary matroid is a graphic matroid if and only if it does not has a minor $F \in \mathcal{F}$.
\end{thm}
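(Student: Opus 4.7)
The plan is as follows. The forward direction (necessity) is a short check. By Whitney's planarity criterion, $M^*(G)$ is graphic if and only if $G$ is planar, so neither $M^*(K_5)$ nor $M^*(K_{3,3})$ is graphic. The Fano matroid $F_7$ is binary but non-regular --- it has no representation over any field of characteristic different from $2$ --- and every graphic matroid is regular, so $F_7$, and dually $F_7^*$, is non-graphic. Since graphicness is closed under taking minors, any binary matroid that contains a member of $\mathcal{F}$ as a minor is non-graphic.

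The reverse direction (sufficiency) is the real content, and I would prove it by induction on $|E(M)|$ after first reducing to the case that $M$ is $3$-connected. The reduction is routine: $1$- and $2$-sums of binary matroids correspond on the graphic side to disjoint union and $2$-vertex amalgamation of graphs, and so preserve graphicness; each element of $\mathcal{F}$ is $3$-connected, so a minimal counterexample must be $3$-connected as well.

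For the $3$-connected case I would apply Tutte's wheel-and-whirl theorem: every $3$-connected binary matroid that is not a wheel has a single-element deletion or contraction which is still $3$-connected. The wheels $M(\mathcal{W}_n)$ are trivially graphic and provide the base of the induction. The inductive step then becomes: if $M$ is a $3$-connected binary matroid with no $\mathcal{F}$-minor, and some $3$-connected single-element deletion or contraction $M'$ of $M$ is graphic, then $M$ is itself graphic. Fixing a graph $G$ with $M(G)=M'$, one analyses the possible binary single-element extensions (or coextensions) of $M(G)$, using Whitney's $2$-isomorphism theorem to control the ambiguity in $G$, and shows that any such extension which is not itself of the form $M(G')$ must already contain one of the four forbidden minors. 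An alternative, cleaner route is Seymour's decomposition theorem for regular matroids: the absence of $F_7,F_7^*$ minors makes $M$ regular, while the absence of $M^*(K_5)$ and $M^*(K_{3,3})$ minors rules out non-planar cographic pieces as well as the sporadic matroid $R_{10}$, leaving only graphic pieces which reassemble via $1$-, $2$- and $3$-sums into a graphic whole.

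The main obstacle is the inductive verification that no $3$-connected binary matroid simultaneously avoids all four excluded minors yet fails to be graphic: organising the case analysis of binary single-element extensions of a $3$-connected graphic matroid, or equivalently establishing Seymour's decomposition of regular matroids, is a substantial structural argument, and is the step on which the classical proof really turns.
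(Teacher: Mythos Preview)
The paper does not prove this statement: it is quoted from Oxley \cite{ox} as Tutte's classical excluded-minor characterisation of graphic matroids among binary matroids, and is used throughout only as a black box (see the proofs of Lemmas~\ref{ngqF7}--\ref{qk33}). There is therefore nothing in the paper to compare your argument against; no proof is expected here.

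That said, your outline is a faithful sketch of the standard approaches. The forward direction is correct as written. For the reverse direction you describe two routes, both legitimate: the wheels-and-whirls induction is essentially Tutte's original strategy, while the Seymour-decomposition shortcut also works since $R_{10}$ contains an $M^*(K_{3,3})$-minor, any non-planar cographic piece contains an $M^*(K_5)$- or $M^*(K_{3,3})$-minor, and $1$-, $2$-, and $3$-sums of graphic matroids are graphic (the last because a $3$-circuit in a cycle matroid is a triangle in the underlying graph, so the $3$-sum is realised by a graph clique-sum). Either way the heavy lifting lies in results that the present paper simply imports, which is why the authors cite rather than prove.
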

\begin{thm}\cite{ox}\label{cgt}
	A binary matroid is a cographic matroid if and only if it does not has a minor from the set $\{F_7, M(K_5), F_7^*, M(K_{3,3})\}$.
\end{thm}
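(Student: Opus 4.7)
My plan is to deduce Theorem \ref{cgt} directly from the graphic excluded-minor theorem (Theorem \ref{gt}) by matroid duality. By definition a binary matroid $M$ is cographic precisely when its dual $M^{*}$ is graphic. Moreover, matroid duality is an involution which interchanges deletion and contraction, so for any matroid $N$, $N$ is a minor of $M^{*}$ if and only if $N^{*}$ is a minor of $M$; and $N$ is binary if and only if $N^{*}$ is binary. These two facts, both recorded in Oxley \cite{ox}, are all I will need beyond Theorem \ref{gt}.

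With these in hand the proof reduces to a one-line translation. I would apply Theorem \ref{gt} to the matroid $M^{*}$: it is graphic if and only if it has no minor in the family $\mathcal{F}=\{F_{7}^{*}, M^{*}(K_{3,3}), F_{7}, M^{*}(K_{5})\}$. By the minor/duality correspondence quoted above, this is equivalent to saying $M$ has no minor isomorphic to any member of
\[
\mathcal{F}^{*}:=\{(F_{7}^{*})^{*},\ (M^{*}(K_{3,3}))^{*},\ (F_{7})^{*},\ (M^{*}(K_{5}))^{*}\}.
\]

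Computing these duals one by one, using involutivity, yields $(F_{7}^{*})^{*}=F_{7}$, $(M^{*}(K_{3,3}))^{*}=M(K_{3,3})$, $(F_{7})^{*}=F_{7}^{*}$, and $(M^{*}(K_{5}))^{*}=M(K_{5})$. Hence $\mathcal{F}^{*}=\{F_{7},\ M(K_{5}),\ F_{7}^{*},\ M(K_{3,3})\}$, which is precisely the excluded set in the statement of Theorem \ref{cgt}. Combining this with the equivalence ``$M$ cographic $\iff$ $M^{*}$ graphic'' gives both directions simultaneously.

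There is no genuine obstacle here: all of the structural content, namely Tutte's theorem on graphic binary matroids, is already packaged in Theorem \ref{gt}. The only points requiring care are (i) that duality is an involution on binary matroids, and (ii) that it commutes with taking minors in the above sense; once those are invoked the argument is purely formal, and isomorphism of the resulting excluded family to the one in the statement is immediate.
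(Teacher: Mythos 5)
Your duality argument is correct: the paper gives no proof of this statement (it is quoted from Oxley), and deducing it from Theorem \ref{gt} via the facts that $M$ is cographic if and only if $M^{*}$ is graphic, that minors dualize ($N$ is a minor of $M$ iff $N^{*}$ is a minor of $M^{*}$), and that duality is an involution is exactly the standard derivation. Nothing is missing; the computation of the dual excluded family is right and yields precisely the set $\{F_7, M(K_5), F_7^{*}, M(K_{3,3})\}$ in the statement.
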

\noindent In this paper, we use the technique discovered by Mundhe et al. \cite{gm} to find the excluded minors.
The following lemmas are used to prove the main theorems. 
\begin{lem}\label{mainlemma}
	Let $M_S$ is not a graphic binary matroid for a cographic binary matroid $M$ for $S\subseteq E(M)$ and $|S|=k, k\geq 2$. Then there exists a minor $P$ of $M$ such as one of the below is true.\\
	(i) $P_S \cong F$ or $P_S/S' \cong F$, for some $S'\subseteq S$ and $F \in \mathcal{F}$.\\
	(ii) $P$ is an extension of a minimal minor $E$ by an element, where $E \notin \mathcal{C}_{k-1}$.
	\begin{proof}
		On a similar line of the proof of Lemma 3.3 in \cite{gm}.
	\end{proof} 
\end{lem}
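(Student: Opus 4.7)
Since $M_S$ is not graphic, Theorem~\ref{gt} supplies disjoint subsets $Y,Z\subseteq E(M)$ and some $F\in\mathcal{F}$ with $M_S\setminus Z/Y\cong F$. The plan is to separate these sets according to whether their elements lie in $S$: write $Z=Z_1\cup Z_2$ and $Y=Y_1\cup Y_2$ with $Z_1=Z\cap S$, $Y_1=Y\cap S$, and $Z_2,Y_2\subseteq E(M)\setminus S$. I then intend to translate each piece into a minor operation on $M$ itself, using conclusion~(i) when no element of $S$ is deleted and conclusion~(ii) when some element of $S$ must be deleted.

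The first step, which I would verify directly on the representing matrix $A_S$, is the following compatibility of splitting with minor operations: for $f\notin S$ both $(M_S)\setminus f=(M\setminus f)_S$ and $(M_S)/f=(M/f)_S$ hold, while for $e\in S$ one has $(M_S)\setminus e=(M\setminus e)_{S\setminus\{e\}}$. The deletion cases are immediate (removing a column either preserves or removes the corresponding $1$ in the appended row), and the contraction of $f\notin S$ can be performed by pivoting inside the upper block of $A_S$, leaving the appended row untouched since its $f$-entry is $0$. Granting these identities, set $M'=M\setminus Z_2/Y_2$; then $M'$ is a minor of $M$, still contains $S$ in its ground set, and $F\cong M'_S\setminus Z_1/Y_1$.

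If $Z_1=\emptyset$, let $P=M'$: then $P_S/Y_1\cong F$ with $Y_1\subseteq S$, which is conclusion~(i) with $S'=Y_1$ (and $P_S\cong F$ when $Y_1=\emptyset$ too). Otherwise pick any $e\in Z_1$. Applying the deletion identity for $e\in S$ once more, $F$ is a minor of $(M'\setminus e)_{S\setminus\{e\}}$, a splitting of $M'\setminus e$ on a set of size $k-1$; hence $M'\setminus e\notin\mathcal{C}_{k-1}$. Choose $E$ to be a minor-minimal minor of $M'\setminus e$ failing to lie in $\mathcal{C}_{k-1}$. Every proper minor of $E$ is also a minor of $M'\setminus e$ and therefore lies in $\mathcal{C}_{k-1}$ by minimality, so $E$ is a minimal forbidden minor for $\mathcal{C}_{k-1}$ in the whole minor order. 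Writing $E=(M'\setminus e)\setminus A/B$ and putting $P=M'\setminus A/B$, the element $e$ remains in $E(P)$ and $P\setminus e=E$, so $P$ is a minor of $M$ obtained from $E$ by an extension at the single element $e$: conclusion~(ii).

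The chief technical obstacle is the commutation identity of paragraph two: it is intuitively evident from the block structure of $A_S$, but has to be spelled out because splitting is not functorial with respect to operations that touch $S$, and any sloppiness here would conflate the two cases. A secondary subtlety is the ``minimality promotion'' used in the last case: one must argue that a minor-minimal forbidden configuration for $\mathcal{C}_{k-1}$ discovered inside $M'\setminus e$ remains minor-minimal in the ambient minor order on all cographic matroids, which follows from transitivity of the minor relation together with the fact that minors of cographic matroids are cographic. Both ingredients are already present in the argument of Lemma~3.3 of~\cite{gm}, which is why the authors simply invoke that proof.
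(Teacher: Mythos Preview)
Your argument is correct and is precisely the standard reduction that Lemma~3.3 of \cite{gm} carries out in the graphic setting: partition the deletion/contraction sets realising the $F$-minor according to membership in $S$, commute the non-$S$ operations through the splitting via the matrix identities you state, and then branch on whether an element of $S$ had to be deleted. Since the paper's own proof is simply a pointer to that lemma, your reconstruction matches the intended approach; the only cosmetic point is that the ``minimality promotion'' you flag is immediate from transitivity of the minor relation, so it need not be singled out as a separate obstacle.
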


\begin{lem}
	Let $P$ be the minor as stated in Lemma \ref{mainlemma}(i). Then $P$ does not contain a coloop. 
\end{lem}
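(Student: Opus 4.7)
The plan is to assume for contradiction that $P$ contains a coloop $e$ and derive a contradiction with Lemma \ref{mainlemma}(i). I will first show that $e$ is still a coloop of $P_S$. Let $A$ be a matrix of $P$ over $GF(2)$, so the matrix $A_S$ of $P_S$ is obtained from $A$ by appending one extra row that has a $1$ in the columns indexed by $S$ and $0$ elsewhere; in particular, the column of $A_S$ indexed by $e$ is obtained from its column in $A$ by appending a single bit. Any linear dependence of this column on the other columns of $A_S$, upon projecting to the first $r(P)$ rows, would force a linear dependence of the column of $e$ in $A$ on the other columns of $A$, contradicting the coloop hypothesis on $e$. Hence $e$ is a coloop of $P_S$.

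Next I will use the standard fact that every matroid $F\in\mathcal{F}=\{F_7^*,M^*(K_{3,3}),F_7,M^*(K_5)\}$ is $3$-connected and in particular has no coloop. By Lemma \ref{mainlemma}(i), either $P_S\cong F$ or $P_S/S'\cong F$ for some $S'\subseteq S$. If $P_S\cong F$, then $e$ would be a coloop of $F$, which is impossible. If $P_S/S'\cong F$ with $e\notin S'$, the coloop property of $e$ in $P_S$ is preserved under contraction of elements distinct from $e$, so again $e$ would be a coloop of $F$, a contradiction.

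The main obstacle is the remaining case $e\in S'\subseteq S$, in which $e$ does not appear in $F$ and so there is no direct clash with $F$ being coloop-free. Here I will use that, since $e$ is a coloop of $P_S$, one has $P_S/e=P_S\setminus e$, and a direct matrix comparison shows $P_S\setminus e=(P\setminus e)_{S\setminus\{e\}}$. Combining these,
\[
(P\setminus e)_{S\setminus\{e\}}/(S'\setminus\{e\}) \;=\; P_S/S' \;\cong\; F,
\]
so $P\setminus e$ is a proper minor of $P$ still satisfying the conclusion of Lemma \ref{mainlemma}(i), but with the smaller splitting set $S\setminus\{e\}$. This contradicts the minor-minimality built into the construction of $P$ in Lemma \ref{mainlemma}; in the base case $k=2$ the contradiction is even sharper, since then $|S\setminus\{e\}|=1$ and the observation that $\mathcal{C}_1$ contains all cographic matroids forces $(P\setminus e)_{S\setminus\{e\}}$ to be graphic, contradicting the presence of the non-graphic minor $F$.
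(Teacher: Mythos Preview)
Your argument is sound in structure and likely mirrors the approach in \cite{gm}, to which the paper simply defers. The first two cases (when $P_S\cong F$, or $P_S/S'\cong F$ with $e\notin S'$) are handled cleanly: the key observations that a coloop of $P$ remains a coloop of $P_S$, and that coloops persist under contraction of other elements, are correct and yield an immediate contradiction with the $3$-connectedness of every $F\in\mathcal{F}$.

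The third case ($e\in S'\subseteq S$) is where you should be more careful. Your computation
\[
(P\setminus e)_{S\setminus\{e\}}/(S'\setminus\{e\})\;\cong\;P_S/S'\;\cong\;F
\]
is correct, but note that the new splitting set $S\setminus\{e\}$ has size $k-1$, not $k$. Thus $P\setminus e$ does not literally satisfy the hypothesis of Lemma~\ref{mainlemma}(i) for the \emph{same} $S$, and ``minor-minimality of $P$ for the given $S$'' is not, on its face, contradicted. What actually resolves this is that the minor $P$ produced in the proof of Lemma~\ref{mainlemma} (in \cite{gm}) is chosen minimal with respect to a stronger ordering that also tracks $|S'|$ (equivalently, one may take $P$ minimal subject to $P_S/S'\cong F$ with $|S'|$ as small as possible); your reduction then strictly decreases this measure, which is the intended contradiction. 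Your base case $k=2$ is a clean illustration of this: there $|S\setminus\{e\}|=1$, and since every cographic matroid lies in $\mathcal{C}_1$, the matroid $(P\setminus e)_{S\setminus\{e\}}$ would have to be graphic, contradicting the presence of $F$ as a minor.

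Since the paper's own proof is only the one-line reference ``On a similar line of the proof of Lemma~3.4 in \cite{gm}'', a line-by-line comparison is not possible, but your argument is the standard one and almost certainly coincides with that of \cite{gm}. Just make explicit which minimality you are invoking in the third case.
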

\begin{proof}
	On a similar line of the proof of Lemma 3.4 in \cite{gm}.
\end{proof}
\begin{lem}\label{rel}
	Let $P$ be the minor as stated in Lemma \ref{mainlemma}(i), without containing a coloop. Then $P$ is a coextension of $Q_F$ by $n$ elements, where $n \leq k$, $k \geq 2$ or $P\cong Q_F$ for some binary matroid $N$ with $a \in E(N)$, such that $N\backslash a\cong F$ for $F \in \mathcal{F}$. 
\end{lem}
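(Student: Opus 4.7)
My plan is to exploit the standard correspondence between the splitting operation and the elementary lift of a binary matroid, which is the working framework of \cite{gm}. Concretely, for a binary matroid $P$ and $S\subseteq E(P)$, the row appended in the definition of $P_S$ can be promoted to a new column, giving a binary matroid $P^{e}$ (an elementary lift of $P$ by a single element $e\notin E(P)$) that satisfies $P^{e}\backslash e = P_S$ and $P^{e}/e = P$. Once this dictionary is in place, each alternative in Lemma \ref{mainlemma}(i) reduces to a statement about a contraction of $P^{e}$.

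First I would dispose of the case $P_S\cong F$ for some $F\in\mathcal{F}$. Here $P^{e}\backslash e\cong F$, so taking $N=P^{e}$ and $a=e$ witnesses $P=P^{e}/e$ as an elementary quotient of $F$; by the very definition of $Q_F$ this gives $P\cong Q_F$, which is the second alternative of the lemma.

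Next I would handle the case $P_S/S'\cong F$ for some nonempty $S'\subseteq S$ with $|S'|=n$. Since $e\notin S'$, deletion of $e$ commutes with contraction by $S'$, so
$$
(P^{e}/S')\backslash e \;=\; (P^{e}\backslash e)/S' \;=\; P_S/S' \;\cong\; F.
$$
Setting $N=P^{e}/S'$ we find $N\backslash e\cong F$, so $N/e$ is an elementary quotient of $F$, that is, $N/e\cong Q_F$. On the other hand $N/e=(P^{e}/S')/e=(P^{e}/e)/S'=P/S'$. Hence $P/S'\cong Q_F$, which exhibits $P$ as a coextension of $Q_F$ by $n$ elements, and the bound $n\leq k$ is automatic from $S'\subseteq S$ and $|S|=k$.

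The main obstacle, such as it is, is bookkeeping: one must verify that the new element $e$ arising from the splitting is genuinely disjoint from $S$ so that the contraction/deletion swap in the second case is legitimate, and one must ensure the elementary lift $P^{e}$ is well defined for the minor $P$ obtained from Lemma \ref{mainlemma}. The no-coloop hypothesis is convenient at exactly this point, since a coloop in $P$ would interact pathologically with the appended row and could degenerate the splitting. Apart from these routine checks, the argument is essentially a direct translation from the language of splittings to the language of quotients, and I do not expect any genuinely new difficulty beyond what is already handled in the analogous graphic statement in \cite{gm}.
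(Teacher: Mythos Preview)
Your argument is correct and is precisely the elementary-lift/quotient translation that underlies Lemma~3.6 of \cite{gm}, to which the paper simply defers for its proof. In other words, you have faithfully reconstructed the intended argument; there is nothing to add.
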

\begin{proof}
	On a similar line of the proof of Lemma 3.6 in \cite{gm}.
\end{proof}
\noindent From the definition of an elementary quotient and above lemma, $N/a=Q_F$, $F \in \mathcal{F}$. Thus, we need quotients of every $F \in \mathcal{F}$ to find excluded minors for the class $\mathcal{C}_k$, for $k \geq 2$,  Mundhe et al. \cite{gm} obtained graphic quotients for every $F \in \mathcal{F}$ as follows.
\begin{lem}\cite{gm}\label{qF7*}
	A graphic elementary quotient $Q_{F_7^*} \cong M(Q_1)$ or $Q_{F_7^*} \cong M(Q_2)$. The graphs $Q_1$ and $Q_2$ are shown in Figure \ref{quotient}.
\end{lem}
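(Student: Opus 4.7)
The plan is to enumerate all single-element binary extensions of $F_7^*$ and identify which yield graphic elementary quotients upon contraction. I would fix a $4 \times 7$ binary representation $A$ of $F_7^*$; every binary extension $N$ of $F_7^*$ by an element $a$ corresponds to appending a column $c \in \mathbb{F}_2^4$ to $A$. Up to row operations, the quotient $N/a$ is represented by the $3 \times 7$ matrix obtained by row-reducing so that $c=e_1$ and then deleting the first row and the column of $a$. The choice $c=0$ produces a loop and yields $N/a \cong F_7^*$, which is not graphic by Theorem \ref{gt}.

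Next, I would classify the $15$ nonzero $c$ into orbits under the action of $\mathrm{Aut}(F_7^*) \cong GL(3,2)$, using as an invariant the number of unordered pairs $\{c_i,c_j\}$ of distinct columns of $A$ with $c_i+c_j=c$ (which equals the number of $3$-element circuits of $N$ containing $a$). A direct check yields three orbits: seven ``parallel'' extensions (where $c$ equals an existing column); seven ``three-triangle'' extensions (three such pairs); and one ``zero-triangle'' extension ($c$ is a sum of three columns of $A$ but of no two), partitioning the nonzero vectors as $7+7+1=15$.

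For each orbit I would compute $N/a$ explicitly from a representative. The parallel case splits as a loop together with $F_7^*/e$; since $F_7^*/e \cong (F_7 \setminus e)^* \cong M(K_4)^* \cong M(K_4)$ (using that $M(K_4)$ is self-dual), the quotient is graphic. The three-triangle case, after row reduction, produces a $3 \times 7$ matrix whose seven columns form three parallel pairs and one singleton; the simplification is $U_{3,4} \cong M(C_4)$, so the quotient is $M(C_4)$ with three doubled edges, again graphic. The zero-triangle case, after row reduction, yields a $3 \times 7$ matrix whose columns are precisely the seven nonzero vectors of $\mathbb{F}_2^3$, giving $N/a \cong F_7$, which is not graphic.

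Thus exactly two graphic elementary quotients arise, and I would identify them with $M(Q_1)$ and $M(Q_2)$ by exhibiting explicit bijections between the seven columns of each $3 \times 7$ matrix and the seven edges of $Q_1$ or $Q_2$ that send circuits to cycles. The main obstacle is completing the orbit enumeration and case computations without error, particularly verifying that the three orbits genuinely exhaust all nonzero $c$; once the representations are in hand, both the graphicness tests (via Theorem \ref{gt}, using that every minor from $\mathcal{F}$ has at least seven elements) and the final isomorphisms with $M(Q_i)$ are routine verifications.
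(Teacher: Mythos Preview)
The paper does not supply its own proof of this lemma; it is quoted from \cite{gm} without argument, so there is nothing to compare your attempt against directly. That said, your enumeration is correct and is the natural approach: the three types of nonzero extension vectors (parallel to an existing column; sum of exactly three unordered pairs of columns; sum of no pair of columns) do partition the $15$ nonzero vectors of $\mathbb{F}_2^4$ as $7+7+1$, and the resulting quotients are, respectively, $M(K_4)$ together with a loop, the $4$-cycle with three doubled edges, and $F_7$. Your identifications, including $F_7^*/e\cong (F_7\setminus e)^*\cong M(K_4)$, are accurate, and the two graphic outcomes match $M(Q_1)$ and $M(Q_2)$.

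One small caution: asserting that the three classes are \emph{orbits} under $\mathrm{Aut}(F_7^*)$ requires knowing how that group, sitting inside $GL(4,2)$ as the stabilizer of the seven-column set, acts on the ambient $\mathbb{F}_2^4$, and you have not justified that. For the lemma, however, you do not actually need the orbit structure: it suffices that your invariant (the number of $3$-circuits of $N$ through $a$, together with whether $a$ is parallel to an existing element) already determines the isomorphism type of $N/a$, which your case computations verify. The argument therefore goes through even if you drop the orbit language and simply observe that every nonzero $c$ falls into one of the three cases.
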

\begin{lem}\cite{gm}\label{qF7}
	A graphic elementary quotient $Q_{F_7} \cong M(Q_3)$. The graph $Q_3$ is shown in Figure \ref{quotient}.
\end{lem}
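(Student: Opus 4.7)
The plan is to enumerate the binary single-element extensions of the Fano matroid $F_7$ and then identify which of the associated elementary quotients are graphic.

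First I would exploit the fact that $F_7$ is represented over $\mathbb{F}_2$ by the matrix whose seven columns are exactly all non-zero vectors of $\mathbb{F}_2^3$. Any binary single-element extension $N$ with $N\backslash e=F_7$ is obtained by appending a column $v\in\mathbb{F}_2^3$; thus either $v=0$ (so $e$ is a loop of $N$), or $v$ coincides with an existing column (so $e$ is parallel to some $a\in E(F_7)$), or $v$ is independent of the columns of $F_7$ (so $e$ is a coloop of $N$ and $r(N)=4$). In the loop case $N/e=F_7$, and in the coloop case $N/e=N\backslash e=F_7$; since $F_7$ is not graphic, neither of these gives a graphic elementary quotient (and indeed neither has rank $r(F_7)-1=2$). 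So the only relevant extension is the parallel one.

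Next I would use the transitivity of $\mathrm{Aut}(F_7)\cong GL_3(\mathbb{F}_2)$ on $E(F_7)$ to reduce to a single case: fix any $a\in E(F_7)$ and let $e$ be a parallel copy of $a$, so that all such parallel extensions are isomorphic. The $2$-circuit $\{a,e\}$ of $N$ becomes the loop $\{a\}$ in $N/e$. For every line $\{a,x,y\}$ of $F_7$ through $a$, the relation $v_a+v_x+v_y=0$ gives a $3$-circuit $\{e,x,y\}$ of $N$, which contracts to a $2$-circuit $\{x,y\}$ of $N/e$. Since the three lines of $F_7$ through $a$ partition $E(F_7)\setminus\{a\}$ into three disjoint pairs, the matroid $N/e$ has rank $2$ on seven elements, consisting of one loop together with three parallel classes of size two.

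Finally, I would identify this structure as the cycle matroid of the multigraph $Q_3$ shown in Figure \ref{quotient}, namely a triangle with every edge doubled and a loop attached at one vertex; its cycle matroid has exactly one loop and three parallel classes of size two on three vertices, matching the computed quotient. This yields $Q_{F_7}\cong M(Q_3)$ and, by the uniqueness coming from transitivity, this is the only graphic elementary quotient of $F_7$. The main obstacle is essentially bookkeeping: the sharpness of the Fano representation forces all non-trivial binary extensions to be parallel extensions, so the real work lies in verifying that the loop and coloop extensions both recover $F_7$ itself, and in reading off the parallel-class structure of $N/e$ from the line structure of $F_7$.
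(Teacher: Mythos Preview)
Your argument is correct. The paper does not actually prove this lemma; it is quoted from \cite{gm} without a proof, so there is no in-paper argument to compare against. Your direct enumeration of binary single-element extensions of $F_7$ via its $\mathbb{F}_2^3$ representation, together with the transitivity of $\mathrm{Aut}(F_7)$ on points, is exactly the natural approach and yields the right answer.

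One small phrasing issue: when you write ``or $v$ is independent of the columns of $F_7$'', this cannot happen for $v\in\mathbb{F}_2^3$, since the columns of $F_7$ already exhaust the non-zero vectors of $\mathbb{F}_2^3$. What you mean is that $r(N)=4$, so $N$ requires a representation in $\mathbb{F}_2^4$ and $e$ is a coloop; you handle this case correctly, so the slip is purely cosmetic. You might also make explicit that the three parallel pairs in $N/e$ are pairwise non-parallel (equivalently, that the three lines of $F_7$ through $a$ map to the three distinct non-zero vectors of $\mathbb{F}_2^3/\langle v_a\rangle\cong\mathbb{F}_2^2$), which is what guarantees the quotient is the cycle matroid of a doubled triangle with a loop rather than something of smaller simple rank. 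This is implicit in your rank count but worth one sentence.
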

\begin{lem}\cite{gm}\label{gqm*k33}
	A graphic elementary quotient $Q_{M^*(K_{3,3})} \cong M(Q_4)$ or $Q_{M^*(K_{3,3})} \cong M(Q_5)$. The graphs $Q_4$ and $Q_5$ are shown in Figure \ref{quotient}.
\end{lem}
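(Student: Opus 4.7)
The plan is to enumerate all binary single-element extensions of $F = M^*(K_{3,3})$ and determine for which of them the contraction of the new element yields a graphic matroid. By the discussion preceding Lemma \ref{rel}, every elementary quotient $Q_F$ arises as $N/a$ with $N\setminus a \cong F$, so this enumeration captures all candidates.

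First, I would fix a $4\times 9$ binary matrix $A$ representing $M^*(K_{3,3})$, with columns indexed by the edges of $K_{3,3}$ and rows forming a basis for the cycle space of $K_{3,3}$. A binary extension by an element $a$ amounts to appending a column $v\in\mathbb{F}_2^4$ to $A$. The automorphism group $\mathrm{Aut}(M^*(K_{3,3}))\cong\mathrm{Aut}(K_{3,3})$ acts on $\mathbb{F}_2^4$, and I would use this action to partition the $16$ choices of $v$ into orbits. The case $v=0$ makes $a$ a loop in $N$, so $N/a\cong F$, which is not graphic; the $9$ values of $v$ equal to a column of $A$ form a single orbit by edge-transitivity of $K_{3,3}$ and yield $N/a\cong F$ with a loop, again not graphic. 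The remaining six values of $v$ fall into only a few orbits, leaving a short list of candidate extensions to analyze.

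Next, for each orbit representative, I would compute $N/a$ by pivoting on $v$ in $[A\mid v]$ and deleting the pivot row together with the column $v$; the resulting $3\times 9$ matrix represents the rank-$3$ binary matroid $N/a$. I would test graphicness via Theorem \ref{gt}: in a rank-$3$ binary matroid, the minors $M^*(K_{3,3})$ and $M^*(K_5)$ are ruled out by rank, so one only needs to search for $F_7$ or $F_7^*$ minors. For the graphic cases, I would construct the underlying graph directly — a connected rank-$3$ graphic matroid lives on a multigraph with at most $4$ vertices, which is straightforward to draw — and match it to $Q_4$ or $Q_5$.

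The main obstacle is the case analysis: for each orbit representative one must either exhibit the quotient as the cycle matroid of an explicit $4$-vertex multigraph (identifying it with $Q_4$ or $Q_5$) or certify non-graphicness by locating an $F_7$ or $F_7^*$ minor. The high symmetry of $K_{3,3}$ keeps the orbit count small and makes the bookkeeping tractable; the delicate part is managing the non-graphic cases, where the forbidden minor has to be produced by a specific sequence of deletions and contractions in a matroid on $9$ elements. Once these checks are done, the two graphic outcomes coincide with $M(Q_4)$ and $M(Q_5)$, proving the lemma.
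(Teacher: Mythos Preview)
Your plan contains a concrete error in the treatment of the parallel case, and that error is fatal for the enumeration.

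When $v$ equals an existing column $e$ of $A$, so that $a$ is parallel to $e$ in $N$, you assert that $N/a\cong F$ with a loop and hence is not graphic. This is wrong already on rank grounds: $r(N/a)=3$ while $r(F)=r(M^*(K_{3,3}))=4$. What actually happens is that contracting $a$ turns $e$ into a loop, and $N/a\setminus e\cong F/e=M^*(K_{3,3})/e=M^*(K_{3,3}\setminus e)$. Since $K_{3,3}\setminus e$ is planar, $M^*(K_{3,3}\setminus e)$ is graphic, and adjoining the loop $e$ keeps it graphic. So this orbit produces a \emph{graphic} quotient, not a non-graphic one.

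This matters because $Q_5$ in Figure~\ref{quotient} carries a loop. Among your sixteen vectors $v$, a loop can appear in $N/a$ only when $a$ is parallel to some element of $F$, i.e.\ only in the nine cases you discarded; for the six ``new'' vectors, $a$ is parallel to nothing and $N/a$ is loopless. Consequently $M(Q_5)$ can arise only from the parallel orbit, and your enumeration as written would miss it entirely, recovering at most $M(Q_4)$.

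The paper does not give its own proof here; the lemma is quoted from \cite{gm}. Regardless of what argument that reference uses, your case analysis needs to be corrected at this step: the parallel orbit must be retained, the quotient $M^*(K_{3,3}\setminus e)$ together with a loop identified with $M(Q_5)$, and the six remaining vectors then handled to isolate $M(Q_4)$ and rule out the rest.
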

\begin{lem}\cite{gm}\label{gqm*k5}
	A graphic elementary quotient $Q_{M^*(K_5)} \cong M(Q_i)$, the graph $Q_i$ is as given in Figure \ref{quotient}, for $i=6,7,8,9$.
\end{lem}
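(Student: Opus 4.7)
The strategy is to enumerate all binary single-element extensions of $M^*(K_5)$ up to symmetry, compute each resulting elementary quotient, and identify which ones are graphic; the surviving isomorphism types should be exactly $M(Q_6),M(Q_7),M(Q_8),M(Q_9)$.

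Fix a $6\times 10$ binary matrix $A$ representing $M^*(K_5)$, with rows indexed by a basis of the bond space of $K_5$ and columns indexed by the ten edges. A binary elementary extension of $M^*(K_5)$ is obtained by appending a new column $v\in GF(2)^{6}$ to $A$, yielding $N=M([A\mid v])$ with new element $a$; the associated elementary quotient is $Q=N/a$. Concretely, $Q$ is computed by row-reducing $[A\mid v]$ until $v$ has a single nonzero entry and then deleting that row together with the column indexed by $a$. When $v=0$ or $v$ is not in the column span of $A$, the element $a$ is a loop or a coloop of $N$ and $Q\cong M^*(K_5)$; otherwise $Q$ has rank $5$. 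Since $\mathrm{Aut}(M^*(K_5))$ contains $S_5$ acting via vertex permutations of $K_5$, two extensions whose columns lie in the same $S_5$-orbit on $GF(2)^{6}$ produce isomorphic quotients, so the $2^{6}$ choices of $v$ collapse to a short list of representatives.

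For each representative extension, test whether $Q$ is graphic. By Theorem \ref{gt}, this amounts to confirming the absence of every minor in $\mathcal{F}$; alternatively, one can attempt to realize $Q$ as the cycle matroid of a graph on six vertices and ten edges by reading off its cocircuit structure, since the cocircuits of such a graphic matroid correspond to the bonds of the putative graph. In the graphic cases, identify the realizing graph up to Whitney $2$-isomorphism with one of $Q_6,Q_7,Q_8,Q_9$ displayed in Figure \ref{quotient}; in the remaining cases, exhibit a forbidden minor explicitly. The four graphic representatives are distinguished from each other by invariants such as degree sequence, girth, and triangle count.

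The main obstacle will be the case analysis. Although $S_5$-symmetry drastically cuts down the number of extensions to examine, each non-graphic case must be certified by producing an explicit $F_7$-, $F_7^*$-, $M(K_5)$- or $M(K_{3,3})$-minor inside a rank-$5$ matroid on ten elements, and the four graphic cases must be shown to be pairwise non-isomorphic and to coincide with $Q_6,\ldots,Q_9$. We follow the organizational scheme of Mundhe et al.\ \cite{gm} used in the three preceding lemmas, identifying the cocircuits of each extension and using them both to locate forbidden configurations and, in the graphic cases, to reconstruct the realizing graph edge by edge.
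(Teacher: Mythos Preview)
The paper does not supply its own proof of this lemma; it is quoted verbatim from Mundhe et al.\ \cite{gm}. Your plan---enumerate binary single-element extensions of $M^{*}(K_5)$ modulo the $S_5$-action, contract the new element, and sort the resulting rank-$5$ matroids into graphic and non-graphic---is exactly the natural direct approach, and is presumably what \cite{gm} carries out. So there is no methodological divergence to discuss.

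Two small corrections to your write-up. First, since $A$ has full row rank $6$, every $v\in GF(2)^{6}$ lies in its column span, so the ``coloop'' case you mention never occurs; only $v=0$ gives a loop and hence $Q\cong M^{*}(K_5)$. Second, in your final paragraph you list $M(K_5)$ and $M(K_{3,3})$ among the forbidden minors to locate, but those certify non-cographicness; for graphicness the list is $\{F_7,\,F_7^{*},\,M^{*}(K_5),\,M^{*}(K_{3,3})\}$, as in Theorem~\ref{gt}. In fact, since $r(Q)=5<6=r(M^{*}(K_5))$, the minor $M^{*}(K_5)$ is automatically excluded; and the present paper's Lemma~\ref{qk5} shows by a parity-of-cocircuits argument that $M^{*}(K_{3,3})$ is also automatically excluded once $F_7$ and $F_7^{*}$ are absent. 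Thus in the non-graphic cases you need only exhibit an $F_7$ or $F_7^{*}$ minor, which shortens that part of the case analysis considerably.
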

\begin{figure}[h!]
	\centering
	\unitlength 1mm 
	\linethickness{0.4pt}
	\ifx\plotpoint\undefined\newsavebox{\plotpoint}\fi 
	\begin{picture}(142.344,61.49)(0,0)
		\put(3.359,36.436){\circle*{1.682}}
		\put(21.228,36.404){\circle*{1.682}}
		\put(3.683,50.69){\circle*{1.682}}
		\put(21.552,50.658){\circle*{1.682}}
		\put(3.541,50.696){\line(1,0){18}}
		\put(3.467,50.692){\line(0,-1){14.125}}
		\put(21.342,36.317){\line(0,1){14.625}}
		\put(3.569,36.436){\line(1,0){18.079}}
		\multiput(3.464,50.731)(.0418962264,-.0337146226){424}{\line(1,0){.0418962264}}
		\multiput(3.149,36.436)(.043384434,.0337146226){424}{\line(1,0){.043384434}}
		\put(32.9,36.381){\circle*{1.682}}
		\put(50.769,36.348){\circle*{1.682}}
		\put(33.224,50.635){\circle*{1.682}}
		\put(51.093,50.603){\circle*{1.682}}
		\put(33.082,50.641){\line(1,0){18}}
		\put(33.008,50.637){\line(0,-1){14.125}}
		\put(50.883,36.262){\line(0,1){14.625}}
		\put(33.11,36.381){\line(1,0){18.079}}
		\qbezier(32.69,36.381)(42.57,28.392)(50.769,36.381)
		\qbezier(33.201,50.654)(25.599,45.35)(32.847,36.512)
		\qbezier(33.201,50.83)(42.393,59.051)(50.878,50.654)
		\qbezier(21.357,50.83)(15.169,58.432)(24.539,56.841)
		\qbezier(24.539,56.841)(29.488,52.687)(21.71,51.007)
		\put(58.862,37.241){\circle*{1.682}}
		\put(76.731,37.209){\circle*{1.682}}
		\put(59.072,37.241){\line(1,0){18.079}}
		\put(66.85,52.135){\circle*{1.682}}
		\multiput(58.652,36.999)(.033736626,.063152263){243}{\line(0,1){.063152263}}
		\multiput(66.85,52.345)(.0337235495,-.050225256){293}{\line(0,-1){.050225256}}
		\qbezier(58.652,37.419)(55.078,50.243)(66.64,52.135)
		\qbezier(76.731,37.419)(64.433,27.434)(58.441,37.209)
		\qbezier(66.64,52.345)(79.358,49.612)(76.52,37.629)
		\qbezier(66.64,52.135)(58.231,56.76)(66.64,58.862)
		\qbezier(66.64,58.862)(72.842,57.075)(66.85,52.345)
		\put(86.401,36.611){\circle*{1.682}}
		\put(104.27,36.579){\circle*{1.682}}
		\put(86.725,50.865){\circle*{1.682}}
		\put(104.594,50.833){\circle*{1.682}}
		\put(86.583,50.871){\line(1,0){18}}
		\put(86.509,50.867){\line(0,-1){14.125}}
		\put(104.384,36.492){\line(0,1){14.625}}
		\put(86.611,36.611){\line(1,0){18.079}}
		\multiput(86.506,50.906)(.0418962264,-.0337146226){424}{\line(1,0){.0418962264}}
		\multiput(86.191,36.611)(.043384434,.0337146226){424}{\line(1,0){.043384434}}
		\put(114.36,36.611){\circle*{1.682}}
		\put(132.229,36.579){\circle*{1.682}}
		\put(114.684,50.865){\circle*{1.682}}
		\put(132.553,50.833){\circle*{1.682}}
		\put(114.542,50.871){\line(1,0){18}}
		\put(114.468,50.867){\line(0,-1){14.125}}
		\put(132.343,36.492){\line(0,1){14.625}}
		\put(114.57,36.611){\line(1,0){18.079}}
		\multiput(114.465,50.906)(.0418962264,-.0337146226){424}{\line(1,0){.0418962264}}
		\multiput(114.15,36.611)(.043384434,.0337146226){424}{\line(1,0){.043384434}}
		\qbezier(114.36,50.663)(123.295,61.49)(132.649,50.873)
		\qbezier(114.36,36.578)(124.451,26.803)(132.019,36.789)
		\qbezier(86.611,50.873)(95.861,60.649)(104.27,50.663)
		\qbezier(86.611,50.873)(78.412,45.303)(86.191,36.789)
		\qbezier(86.401,50.873)(92.707,35.212)(104.9,36.789)
		\put(8.588,6.042){\circle*{1.514}}
		\put(24.67,6.013){\circle*{1.514}}
		\put(8.879,18.871){\circle*{1.514}}
		\put(24.962,18.842){\circle*{1.514}}
		\put(8.752,18.876){\line(1,0){16.2}}
		\put(8.777,6.042){\line(1,0){16.271}}
		\multiput(8.682,18.908)(.0418534031,-.0336806283){382}{\line(1,0){.0418534031}}
		\put(1.388,13.222){\circle*{1.514}}
		\put(33.113,13.334){\circle*{1.514}}
		\multiput(24.819,6.052)(.038364055,.033700461){217}{\line(1,0){.038364055}}
		\put(33.144,13.365){\line(-3,2){8.438}}
		\multiput(1.306,13.477)(.048006369,.033681529){157}{\line(1,0){.048006369}}
		\multiput(1.193,13.14)(.035328502,-.033695652){207}{\line(1,0){.035328502}}
		\multiput(1.118,13.39)(.109345794,-.03364486){214}{\line(1,0){.109345794}}
		\put(24.518,6.19){\line(0,1){13.05}}
		\multiput(8.543,6.19)(.117391304,.033695652){207}{\line(1,0){.117391304}}
		\put(45.166,6.042){\circle*{1.514}}
		\put(61.248,6.013){\circle*{1.514}}
		\put(45.457,18.871){\circle*{1.514}}
		\put(61.54,18.842){\circle*{1.514}}
		\put(45.33,18.876){\line(1,0){16.2}}
		\put(45.355,6.042){\line(1,0){16.271}}
		\put(37.966,13.222){\circle*{1.514}}
		\put(69.691,13.334){\circle*{1.514}}
		\multiput(61.397,6.052)(.038364055,.033700461){217}{\line(1,0){.038364055}}
		\put(69.722,13.365){\line(-3,2){8.438}}
		\multiput(37.884,13.477)(.048006369,.033681529){157}{\line(1,0){.048006369}}
		\multiput(37.771,13.14)(.035328502,-.033695652){207}{\line(1,0){.035328502}}
		\put(61.096,6.19){\line(0,1){13.05}}
		\put(81.114,6.042){\circle*{1.514}}
		\put(97.196,6.013){\circle*{1.514}}
		\put(81.405,18.871){\circle*{1.514}}
		\put(97.488,18.842){\circle*{1.514}}
		\put(81.278,18.876){\line(1,0){16.2}}
		\put(81.303,6.042){\line(1,0){16.271}}
		\put(73.914,13.222){\circle*{1.514}}
		\put(105.639,13.334){\circle*{1.514}}
		\multiput(97.345,6.052)(.038364055,.033700461){217}{\line(1,0){.038364055}}
		\put(105.67,13.365){\line(-3,2){8.438}}
		\multiput(73.832,13.477)(.048006369,.033681529){157}{\line(1,0){.048006369}}
		\multiput(73.719,13.14)(.035328502,-.033695652){207}{\line(1,0){.035328502}}
		\put(117.062,6.252){\circle*{1.514}}
		\put(133.144,6.223){\circle*{1.514}}
		\put(117.353,19.081){\circle*{1.514}}
		\put(133.436,19.052){\circle*{1.514}}
		\put(117.226,19.086){\line(1,0){16.2}}
		\put(117.251,6.252){\line(1,0){16.271}}
		\put(109.862,13.432){\circle*{1.514}}
		\put(141.587,13.544){\circle*{1.514}}
		\multiput(133.293,6.262)(.038364055,.033700461){217}{\line(1,0){.038364055}}
		\put(141.618,13.575){\line(-3,2){8.438}}
		\multiput(109.78,13.687)(.048006369,.033681529){157}{\line(1,0){.048006369}}
		\multiput(109.667,13.35)(.035328502,-.033695652){207}{\line(1,0){.035328502}}
		\put(45.408,18.92){\line(0,-1){13.244}}
		\put(37.63,13.244){\line(1,0){31.743}}
		\put(73.788,13.454){\line(1,0){31.954}}
		\multiput(80.935,19.13)(.0411882952,-.0336997455){393}{\line(1,0){.0411882952}}
		\multiput(80.725,5.886)(.0441417323,.0336587927){381}{\line(1,0){.0441417323}}
		\multiput(117.093,19.34)(.0412842377,-.0336770026){387}{\line(1,0){.0412842377}}
		\multiput(117.093,6.096)(.042369509,.0336795866){387}{\line(1,0){.042369509}}
		\qbezier(109.735,13.454)(109.42,20.707)(117.093,19.13)
		\qbezier(133.49,19.13)(140.743,21.127)(141.689,13.454)
		\qbezier(81.145,18.92)(89.764,25.752)(97.543,18.71)
		\qbezier(61.385,19.13)(59.282,26.383)(64.748,24.806)
		\qbezier(64.748,24.806)(69.268,21.337)(61.595,19.13)
		\put(66.85,28){\makebox(0,0)[cc]{$Q_3$}}
		\put(12.613,28){\makebox(0,0)[cc]{$Q_1$}}
		\put(41.624,28){\makebox(0,0)[cc]{$Q_2$}}
		\put(95.44,28){\makebox(0,0)[cc]{$Q_4$}}
		\put(124.03,28){\makebox(0,0)[cc]{$Q_5$}}
		\put(16.397,1){\makebox(0,0)[cc]{$Q_6$}}
		\put(53.396,1){\makebox(0,0)[cc]{$Q_7$}}
		\put(89.554,1){\makebox(0,0)[cc]{$Q_8$}}
		\put(125.081,1){\makebox(0,0)[cc]{$Q_9$}}
		\qbezier(132.439,51.084)(130.863,58.862)(136.433,56.549)
		\qbezier(136.433,56.549)(140.007,51.294)(132.649,51.084)
	\end{picture}
	
	\caption{Graphic quotients of Non-graphic matroids}
	\label{quotient}
\end{figure}
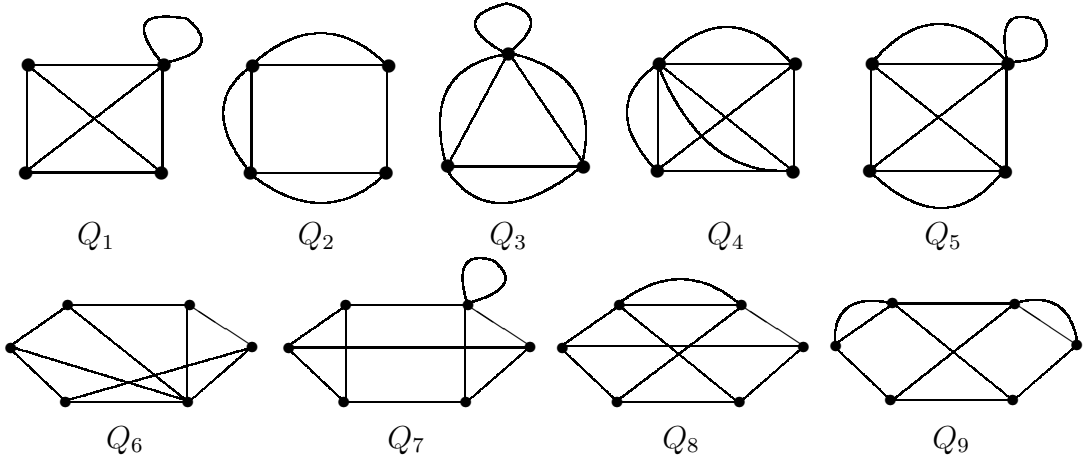
\noindent To find excluded minors for $\mathcal{C}_k$ for $k \geq 3$, we need graphic and non-graphic quotients for every $F \in \mathcal{F}$. Mundhe found graphic quotients for every $F \in \mathcal{F}$. This paper finds non-graphic quotients for every  $F \in \mathcal{F}$. Since either quotient $Q$ or coextension of $Q$ will be the minor of a cographic matroid, by Theorem \ref{cgt}, $Q$ should not contain $F_7$ and $F_7^*$. Thus, for every $F \in \mathcal{F}$, we find non-graphic quotients not containing $F_7$ and $F_7^*$.  
We proved the following lemmas.
\begin{lem}\label{ngqF7}
	A quotient $Q_{F_7}$ not containing $F_7$ and $F_7^*$ is graphic.	
\end{lem}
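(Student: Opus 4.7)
The plan is to enumerate every elementary quotient $Q_{F_7}$ explicitly via the standard binary representation of $F_7$ and then appeal to Lemma~\ref{qF7}. Recall that $F_7$ is represented by the $3\times 7$ matrix over $\mathbb{F}_2$ whose columns are the seven nonzero vectors of $\mathbb{F}_2^3$, so any elementary quotient arises as $N/a$ for a binary extension $N=F_7\cup\{a\}$, and such $N$ is determined by the column $v$ assigned to $a$. If $v$ introduces a new coordinate then $a$ is a coloop of $N$ and $N/a\cong F_7$; if $v=0$ then $a$ is a loop of $N$ and again $N/a=F_7$. In both situations $Q_{F_7}$ contains $F_7$, so the hypothesis immediately excludes these possibilities.

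Under the hypothesis one is therefore reduced to the case in which $v$ is a nonzero vector of $\mathbb{F}_2^3$. Since every nonzero element of $\mathbb{F}_2^3$ already occurs as a column of $F_7$, the new element $a$ is parallel in $N$ to some $e_i\in E(F_7)$. Contraction of $a$ then amounts to projecting $\mathbb{F}_2^3$ onto $\mathbb{F}_2^3/\langle v\rangle\cong\mathbb{F}_2^2$: the element $e_i$ collapses to $0$ and becomes a loop, while the remaining six elements of $F_7$ organize into three parallel classes of size two, one lying above each nonzero vector of $\mathbb{F}_2^2$.

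The resulting matroid has rank $2$ on seven elements, consisting of one loop and three parallel pairs, so it is the cycle matroid of the multigraph on three vertices having three double edges and one loop, namely the graph $Q_3$ of Figure~\ref{quotient}. Hence every admissible $Q_{F_7}$ is isomorphic to $M(Q_3)$, which is graphic by Lemma~\ref{qF7}, and in particular contains neither $F_7$ nor $F_7^*$, confirming consistency with the hypothesis. The only delicate point is verifying that all nonzero choices of $v$ produce isomorphic quotients; this is guaranteed by the transitive action of $GL_3(\mathbb{F}_2)$ on the nonzero vectors of $\mathbb{F}_2^3$, equivalently by the transitivity of the automorphism group of $F_7$ on its ground set.
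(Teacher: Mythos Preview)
Your proof is correct but takes a different route from the paper. The paper argues purely by rank: once the loop/coloop cases are excluded by the hypothesis, one has $r(N)=r(F_7)=3$ and hence $r(Q_{F_7})=r(N/a)=2$; a rank-$2$ matroid cannot contain $M^*(K_{3,3})$ or $M^*(K_5)$ as a minor (both have rank at least $4$), and since the hypothesis already rules out $F_7$ and $F_7^*$, Theorem~\ref{gt} gives graphicness immediately. Your argument instead computes the quotient explicitly and identifies it as $M(Q_3)$, which in effect re-derives the content of Lemma~\ref{qF7}; the final appeal to that lemma is therefore redundant, as $M(Q_3)$ is graphic by definition as a cycle matroid. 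The paper's rank argument is shorter and carries over verbatim to the $F_7^*$ case in Lemma~\ref{ngqF7^*}, whereas your approach yields the precise isomorphism type of the quotient at the cost of a longer computation and a transitivity argument.
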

\begin{proof}
	Let $N\backslash a \cong F_7$, where $N$ is a binary matroid and $a \in E(N)$, then $Q_{F_7}=N/a$, if $a$ is a coloop or a loop, then $Q_{F_7}= N\backslash a \cong F_7$, thus $Q_{F_7} \cong F_7$, a contradiction. If $a$ is not a coloop or a loop, then $r( N\backslash a)=r(F_7)=3$. Thus $r(Q_{F_7})=2$. Hence, $Q_{F_7}$ can not have a minor $F \in \{ M^*(K_{3,3}), M^*(K_5)\}$, as $r(F) \geq 3$. Thus, by Theorem \ref{gt}, $Q_{F_7}$ is graphic. 
\end{proof}
\begin{lem}\label{ngqF7^*}
	A quotient $Q_{F_7^*}$ not containing $F_7$ and $F_7^*$ is graphic.	
\end{lem}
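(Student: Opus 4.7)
The plan is to mirror the proof of Lemma \ref{ngqF7} almost verbatim, with the only substantive change being the rank bookkeeping. Specifically, let $N$ be a binary matroid and $a \in E(N)$ with $N\backslash a \cong F_7^*$, so $Q_{F_7^*} = N/a$. First I would dispose of the degenerate cases: if $a$ is a loop or a coloop of $N$, then contraction and deletion agree, so $N/a = N\backslash a \cong F_7^*$, which forces $Q_{F_7^*}$ to contain $F_7^*$ and contradicts the hypothesis of the lemma.

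This reduces the analysis to the case where $a$ is neither a loop nor a coloop. Then $r(N) = r(N\backslash a) = r(F_7^*) = 4$, and contraction by a non-loop drops the rank by one, so $r(Q_{F_7^*}) = r(N) - 1 = 3$. Combined with the standing hypothesis that $Q_{F_7^*}$ has no $F_7$ or $F_7^*$ minor, it remains only to rule out the other two members of $\mathcal{F}$, namely $M^*(K_{3,3})$ and $M^*(K_5)$, in order to invoke Theorem \ref{gt}.

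For this final step I would use a pure rank argument: since $r(M(K_{3,3})) = 5$ on $9$ elements and $r(M(K_5)) = 4$ on $10$ elements, duality gives $r(M^*(K_{3,3})) = 4$ and $r(M^*(K_5)) = 6$. As the rank of any minor is at most the rank of the ambient matroid and $r(Q_{F_7^*}) = 3$, neither $M^*(K_{3,3})$ nor $M^*(K_5)$ can occur as a minor of $Q_{F_7^*}$. Theorem \ref{gt} then immediately yields that $Q_{F_7^*}$ is graphic.

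I do not expect any genuine obstacle here; the proof is essentially a rank-counting argument parallel to Lemma \ref{ngqF7}. The only point that needs care is checking that $r(F_7^*) = 4$ (so that the contraction drops to rank $3$) and that both $M^*(K_{3,3})$ and $M^*(K_5)$ have rank at least $4$, so that the rank-$3$ matroid $Q_{F_7^*}$ cannot accommodate either as a minor.
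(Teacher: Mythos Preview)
Your proposal is correct and matches the paper's proof essentially line for line: dispose of the loop/coloop case as yielding $Q_{F_7^*}\cong F_7^*$, then in the remaining case use $r(Q_{F_7^*})=3<4\le r(F)$ for $F\in\{M^*(K_{3,3}),M^*(K_5)\}$ together with the hypothesis excluding $F_7$ and $F_7^*$ to invoke Theorem~\ref{gt}. The paper's argument is identical, only slightly terser in not spelling out the duality computation of the ranks.
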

\begin{proof}
	Let $N\backslash a \cong F_7^*$, for a binary matroid $N$ having an element $a$, then $Q_{F_7^*} =N/a$, if $a$ is a coloop or a loop, then $Q_{F_7^*}= N\backslash a \cong F_7^*$, thus $Q_{F_7^*} \cong F_7^*$, a contradiction. If $a$ is not a coloop or a loop, then $r( N\backslash a)=r(F_7^*)=4$. $Q_{F_7^*}$ can not contain $F \in \{ M^*(K_{3,3}), M^*(K_5)\}$ minor, as $r(F) \ge 4$ and $r(Q_{F_7^*})=3$. Thus, by Theorem \ref{gt}, $Q_{F_7^*}$ is graphic.
\end{proof}
\begin{lem}\label{qk5}
	Let a binary matroid be $N$ having an element $a$ such that $a$ is not a loop or coloop, then $Q_{M^*(K_5)}$ not containing $F_7$ or $F_7^*$ is graphic. 
\end{lem}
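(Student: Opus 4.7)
The plan is to follow the pattern of Lemmas \ref{ngqF7} and \ref{ngqF7^*}: use a rank argument to rule out most forbidden minors, and then address the one remaining case separately. Since $a$ is neither a loop nor a coloop of $N$, we have $r(N) = r(N\backslash a) = r(M^*(K_5)) = 6$, so $r(Q_{M^*(K_5)}) = r(N/a) = r(N) - 1 = 5$. Because $r(M^*(K_5)) = 6 > 5$, the matroid $Q_{M^*(K_5)}$ cannot contain $M^*(K_5)$ as a minor, and by hypothesis it contains neither $F_7$ nor $F_7^*$ as a minor. In view of Theorem \ref{gt}, it remains to show that $M^*(K_{3,3})$ is not a minor of $Q_{M^*(K_5)}$.

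Here the rank bound alone is insufficient, since $r(M^*(K_{3,3})) = 5 = r(Q_{M^*(K_5)})$. However, $|E(Q_{M^*(K_5)})| = 10$ while $|E(M^*(K_{3,3}))| = 9$, and the rank equality forces any such minor to arise by removing exactly one element: either deleting a single non-loop element, or removing a single loop (for which deletion and contraction coincide). One would therefore need $Q_{M^*(K_5)}\backslash d \cong M^*(K_{3,3})$ for some $d$, which via $(N/a)\backslash d = (N\backslash d)/a$ and $(N\backslash d)\backslash a = M^*(K_5)\backslash d$ translates into the existence of a binary single-element extension $N\backslash d$ of $M^*(K_5)\backslash d$ by $a$ whose contraction at $a$ is isomorphic to $M^*(K_{3,3})$.

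The main obstacle is ruling out all such configurations. I would do this by enumeration, leveraging two pieces of structure. First, $\mathrm{Aut}(M^*(K_5)) \cong S_5$ acts transitively on $E(M^*(K_5))$, so it suffices to fix a single representative $d$. Second, after fixing a binary representation of $M^*(K_5)\backslash d$, every binary single-element extension by $a$ is obtained by appending a column vector in $\mathbb{F}_2^6$, yielding a finite list of extensions modulo $\mathrm{Aut}(M^*(K_5)\backslash d)$. For each orbit representative one computes $(N\backslash d)/a$ and verifies that it is not isomorphic to $M^*(K_{3,3})$. A useful additional constraint is that $M^*(K_{3,3})$ is itself not a minor of $M^*(K_5)$, because $K_{3,3}$, having six vertices, cannot arise as a minor of the five-vertex graph $K_5$; this severely restricts which extensions could even yield an $M^*(K_{3,3})$-quotient.

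The technical heart of the proof is thus the orbit enumeration and the isomorphism checks against $M^*(K_{3,3})$. If direct enumeration proves unwieldy, a cleaner alternative is to dualize and rephrase the condition as producing $M(K_{3,3})$ as a deletion of a binary single-element extension of a contraction minor of $M(K_5)$, a setting in which a short graph-theoretic obstruction in $K_5$ should be available. Once this last case is dispatched, Theorem \ref{gt} delivers the desired conclusion that $Q_{M^*(K_5)}$ is graphic.
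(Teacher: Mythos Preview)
Your setup is fine through the point where you rule out $M^*(K_5)$ by rank, but the next step contains a computational error that derails the remainder: $r(M^*(K_{3,3})) = 9 - r(M(K_{3,3})) = 9 - 5 = 4$, not $5$. Consequently the rank of $Q_{M^*(K_5)}$ exceeds that of $M^*(K_{3,3})$ by one, and the only way a rank-$4$ matroid on $9$ elements can occur as a minor of a rank-$5$ matroid on $10$ elements is via a single \emph{contraction} (of a non-loop), not a single deletion. Your entire analysis of $Q_{M^*(K_5)}\backslash d$ and of single-element extensions of $M^*(K_5)\backslash d$ is therefore aimed at the wrong case; the case that actually needs work is $Q_{M^*(K_5)}/b \cong M^*(K_{3,3})$, i.e.\ $N/a/b \cong M^*(K_{3,3})$.

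The paper handles this case by dualizing to obtain simultaneously $N^*\backslash a\backslash b \cong M(K_{3,3})$ and $N^*/a \cong M(K_5)$, and then invoking a parity obstruction: since $K_{3,3}$ is cubic, $M(K_{3,3})$ has many odd cocircuits, which forces $N^*$---and hence $N^*/a$---to carry an odd cocircuit; but $K_5$ is $4$-regular, so $M(K_5)$ is Eulerian and every cocircuit is even, a contradiction. This is exactly the ``short graph-theoretic obstruction'' you gesture at in your last paragraph without pinning down, and it is far cleaner than the orbit enumeration you propose. Once you correct the rank of $M^*(K_{3,3})$ and switch to the contraction case, this parity argument is the route to take.
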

\begin{proof}
	Let a binary matroid be $N$ having an element $a$ and $a$ is not a coloop or loop, such that $N\backslash a \cong M^*(K_5)$ then $r(N\backslash a)=6$ and $E(N\backslash a)=10$, then $r(N)=6$ and $E(N)=11$ then $Q_{M^*(K_5)}=N/a$. Thus, $r(Q_{M^*(K_5)})=5$ and $E(Q_{M^*(K_5)})=10$.  Suppose $Q_{M^*(K_5)}$ is not graphic. Then  by Theorem \ref{gt}, $Q_{M^*(K_5)}$ contains $M^*(K_{3,3})$ or  $M^*(K_5)$  minor.\\ $Q_{M^*(K_5)}$ does not contains $M^*(K_5)$ minor, as $r(Q_{M^*(K_5)})=5$ and $r(M^*(K_5))=6$.
	If $Q_{M^*(K_5)}$ contains $M^*(K_{3,3})$ minor, then $Q_{M^*(K_5)} \backslash A_1/A_2 \cong M^*(K_{3,3})$ for some subsets $A_1$ and $A_2$ of $E(Q_{M^*(K_5)})$.
	a) If $A_1=\emptyset$ and $A_2=\emptyset$ then $Q_{M^*(K_5)} \cong M^*(K_{3,3})$, a contradiction, as $r(Q_{M^*(K_5)})=5$ and $r(M^*(K_{3,3}))=4$.
	b) If $A_1=\emptyset$ and $A_2 \neq \emptyset$, then, if $|A_2| > 1$, $Q_{M^*(K_5)}/A_2 \cong M^*(K_{3,3})$ a contradiction, as $r(Q_{M^*(K_5)}/A_2)\leq 3$ and $r(M^*(K_{3,3}))=4$. If $|A_2|=1$, then $Q_{M^*(K_5)}/b \cong M^*(K_{3,3})$ that is $N/a/b \cong M^*(K_{3,3})$ for some $b \in E(Q_{M^*(K_5)})$. Thus, $(N/a/b)^* \cong N^*\backslash a \backslash b \cong M(K_{3,3})$. Also, we have $N \backslash a \cong M^*(K_5)$ thus $N^*/a \cong M(K_5)$. $M(K_{3,3})$ contains more than six odd cocircuits. Hence, $N^*$ contains at least two odd cocircuits without containing $a$. Therefore $N^*/a$ contains at least one odd cocircuit, a contradiction as $N^*/a \cong M(K_5)$ and $M(K_5)$ is Eulerian.
	c) If $A_1 \neq \emptyset$ and $A_2=\emptyset$, then $Q_{M^*(K_5)}\backslash A_1 \cong M^*(K_{3,3})$, a contradiction, as $r(Q_{M^*(K_5)}\backslash A_1)=5$ and $r(M^*(K_{3,3}))=4$, when $|A_1|=1$ and $E(Q_{M^*(K_5)}\backslash A_1) \leq 8$ when $|A_1| > 1$, whereas $E(M^*(K_{3,3}))=9$.
	d) If $A_1 \neq \emptyset$ and $A_2 \neq \emptyset$ then $Q_{M^*(K_5)}\backslash A_1/A_2 \cong M^*(K_{3,3})$, a contradiction as $E(M^*(K_{3,3}))=9$ and $E(Q_{M^*(K_5)}\backslash A_1/A_2) \leq 8$.
	Thus, $M^*(K_{3,3})$ is not a minor of $Q_{M^*(K_5)}$ and hence by Theorem \ref{gt}, we say that $Q_{M^*(K_5)}$ is graphic.
\end{proof}

\begin{lem}\label{qk33}
	Let $a\in E(N)$, where $N$ is a binary matroid, such that $a$ is not a loop or a coloop, then $Q_{M^*(K_{3,3})}$ not containing $F_7$ or $F_7^*$ is graphic. 
\end{lem}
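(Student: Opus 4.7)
The plan is to mirror the proofs of Lemmas \ref{ngqF7}, \ref{ngqF7^*}, and \ref{qk5}, reducing everything to a rank comparison. The only non-graphic binary matroids are, by Theorem \ref{gt}, those containing one of $F_7, F_7^*, M^*(K_{3,3}), M^*(K_5)$ as a minor. Since the hypothesis of the lemma removes $F_7$ and $F_7^*$ from consideration, I only need to show that $Q_{M^*(K_{3,3})}$ is too small (in rank) to accommodate either $M^*(K_{3,3})$ or $M^*(K_5)$ as a minor.

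First I compute the invariants of $Q_{M^*(K_{3,3})}$. Because $M^*(K_{3,3})$ has $|E|=9$ and $r=9-r(M(K_{3,3}))=9-5=4$, and $N\setminus a \cong M^*(K_{3,3})$, we have $|E(N)|=10$. Since $a$ is not a coloop, $r(N)=r(N\setminus a)=4$, and since $a$ is not a loop, $r(Q_{M^*(K_{3,3})})=r(N/a)=r(N)-1=3$. Thus $Q_{M^*(K_{3,3})}$ is a rank-$3$ binary matroid on $9$ elements.

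Now I combine this with the exclusion from the hypothesis. If $Q_{M^*(K_{3,3})}$ were non-graphic, Theorem \ref{gt} would force a minor in $\{F_7, F_7^*, M^*(K_{3,3}), M^*(K_5)\}$; the hypothesis rules out the first two, while $r(M^*(K_{3,3}))=4>3$ and $r(M^*(K_5))=6>3$ rule out the remaining two on rank grounds. This contradiction yields graphicness. The proof is expected to be short and to require no case analysis of the kind needed in Lemma \ref{qk5}, because the rank of $Q_{M^*(K_{3,3})}$ drops strictly below the rank of every potentially obstructing minor; no further subsets $A_1,A_2$ of $E(Q)$ to delete or contract need be examined.
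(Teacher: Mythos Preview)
Your proof is correct and follows essentially the same approach as the paper: compute that $r(Q_{M^*(K_{3,3})})=3$ using that $a$ is neither a loop nor a coloop, then rule out $M^*(K_{3,3})$ and $M^*(K_5)$ as minors on rank grounds (the paper phrases this via subsets $A_1,A_2$, but the content is identical). The only cosmetic difference is that the paper still writes out the two cases explicitly, whereas you dispose of them in one line.
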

\begin{proof}
	Suppose $a\in E(N)$, where $N$ is a binary matroid, such that $a$ is not a coloop or a loop such that $N\backslash a \cong M^*(K_{3,3})$ then $E(N\backslash a)=9$ and $r(N\backslash a)=4$ then $r(N)=4$, $E(N)=10$. 
	We have $Q_{M^*(K_{3,3})}=N/a$, then $r(Q_{M^*(K_{3,3})})=3$, $E(Q_{M^*(K_{3,3})})=9$. Suppose $Q_{M^*(K_{3,3})}$ is not graphic. Then by Theorem \ref{gt}, $Q_{M^*(K_{3,3})}$ has a minor $M^*(K_5)$ or $M^*(K_{3,3})$.\\
	Case(i) If $Q_{M^*(K_{3,3})}$ contains $M^*(K_5)$, then $Q_{M^*(K_{3,3})} \backslash A_1/A_2 \cong M^*(K_5)$ for some subsets $A_1$ or $A_2$ of $E(Q_{M^*(K_{3,3})})$, which is a contradiction, as $r(Q_{M^*(K_{3,3})} \backslash A_1/A_2)\leq 3$ however $r(M^*(K_5))=6$ \\
	Case(ii) If $Q_{M^*(K_{3,3})}$ has a minor $M^*(K_{3,3})$, then $Q_{M^*(K_{3,3})} \backslash A_1/A_2 \cong M^*(K_{3,3})$ for some subsets $A_1$ or $A_2$ of $E(Q_{M^*(K_{3,3})})$, which is a contradiction, as $r(Q_{M^*(K_{3,3})} \backslash A_1/A_2)\leq 3$ however $r(M^*(K_{3,3}))=4$. \\
	Thus from the case(i) and case(ii) and by Theorem \ref{gt}, we say that $Q_{M^*(K_{3,3})}$ is graphic. 
\end{proof}
  
\begin{lem}\label{qM*K5}
$M^*(K_5)$ is the non-graphic quotient $Q_{M^*(K_5)}$, not containing $F_7$ and $F_7^*$.	
\end{lem}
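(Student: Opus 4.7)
The plan is to show that once the hypotheses of Lemma \ref{qk5} fail, i.e.\ when the contracted element $a$ is a loop or coloop, the quotient $N/a$ collapses to $N\setminus a \cong M^*(K_5)$, and that this is the only way a non-graphic quotient $Q_{M^*(K_5)}$ avoiding $F_7$ and $F_7^*$ can arise. So the strategy is a short case split on the nature of $a\in E(N)$, followed by an appeal to the already-established Lemma \ref{qk5}.

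First I would fix a binary matroid $N$ with $a\in E(N)$ and $N\setminus a\cong M^*(K_5)$, so that $Q_{M^*(K_5)}=N/a$. I would then split into two cases. In the degenerate case, where $a$ is a loop or a coloop of $N$, contraction and deletion of $a$ coincide, hence $N/a=N\setminus a\cong M^*(K_5)$. This already gives a non-graphic quotient (it contains $M^*(K_5)$ as a minor, so by Theorem \ref{gt} it is not graphic), and it has no $F_7$ or $F_7^*$ minor, because $M^*(K_5)$ is cographic and Theorem \ref{cgt} forbids such minors in any cographic matroid. Hence $M^*(K_5)$ does appear as a non-graphic quotient $Q_{M^*(K_5)}$ meeting the stated conditions.

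In the remaining case, where $a$ is neither a loop nor a coloop of $N$, I would invoke Lemma \ref{qk5} directly: that lemma precisely asserts that under this hypothesis, any $Q_{M^*(K_5)}$ that avoids $F_7$ and $F_7^*$ is forced to be graphic. Therefore no non-graphic quotient satisfying the hypotheses can arise this way, and combining both cases, $M^*(K_5)$ is the unique non-graphic elementary quotient of the prescribed form.

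There is no real obstacle here; the substantive minor-excluding rank/element-count argument was already carried out in Lemma \ref{qk5}, and what remains is merely the observation that deletion and contraction agree on loops and coloops, together with the fact that $M^*(K_5)$, being cographic, automatically dodges $F_7$ and $F_7^*$ by Theorem \ref{cgt}. The present lemma is thus essentially a corollary of Lemma \ref{qk5} that names the one degenerate quotient that survives.
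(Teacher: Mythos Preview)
Your proposal is correct and follows exactly the paper's approach: a case split on whether $a$ is a loop/coloop (yielding $N/a\cong M^*(K_5)$) versus not (where Lemma~\ref{qk5} forces graphicness). The extra verification you supply---that $M^*(K_5)$ is indeed non-graphic and, being cographic, avoids $F_7$ and $F_7^*$---is a welcome detail the paper leaves implicit.
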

\begin{proof}
	Let $a\in E(N)$, where $N$ is a binary matroid such that $N\backslash a \cong M^*(K_5)$.
	(i) If $a$ is a coloop or a loop. Then $N/a \cong N\backslash a \cong M^*(K_5)$. \\
	(ii) If $a$ is not a coloop or loop then by Lemma \ref{qk5}, $N/a$ is graphic. \\
Thus from above $M^*(K_5)$ is the only non-graphic elementary quotient of $M^*(K_5)$  not containing $F_7$ and $F_7^*$.
 \end{proof}
\begin{lem}\label{qM*K33}
$M^*(K_{3,3})$ is the non-graphic quotient $Q_{M^*(K_{3,3})}$, not containing $F_7$ and $F_7^*$.	
\end{lem}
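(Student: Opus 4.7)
The plan is to mirror the proof of Lemma \ref{qM*K5} verbatim in structure, exploiting the fact that Lemma \ref{qk33} has already eliminated all but the trivial case. Specifically, I would start with an arbitrary binary matroid $N$ and an element $a \in E(N)$ such that $N\backslash a \cong M^*(K_{3,3})$, so that by definition $Q_{M^*(K_{3,3})} = N/a$, and then split into exactly two cases according to the nature of $a$.

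First I would dispose of the easy case: if $a$ is a coloop or a loop in $N$, then contraction and deletion agree on $a$, so $N/a \cong N\backslash a \cong M^*(K_{3,3})$. This recovers $M^*(K_{3,3})$ itself as a (trivially) non-graphic quotient, and it obviously does not contain $F_7$ or $F_7^*$ as a minor, since $M^*(K_{3,3})$ is cographic (Theorem \ref{cgt}). Next, in the remaining case where $a$ is neither a loop nor a coloop, I would simply invoke Lemma \ref{qk33}, which states precisely that $Q_{M^*(K_{3,3})}$ is graphic whenever it avoids $F_7$ and $F_7^*$ under this hypothesis on $a$. Thus no new non-graphic quotient can arise in this case.

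Combining the two cases, the only non-graphic elementary quotient of $M^*(K_{3,3})$ avoiding $F_7$ and $F_7^*$ is $M^*(K_{3,3})$ itself, which is the desired conclusion.

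Since Lemma \ref{qk33} does the entire substantive rank-and-cardinality case analysis, the present lemma is essentially a bookkeeping step. The only potential obstacle is the conceptual one of confirming that the trivial loop/coloop case legitimately contributes $M^*(K_{3,3})$ as a non-graphic quotient (it does, because $M^*(K_{3,3})$ is non-graphic by Theorem \ref{gt}), so no further verification is required beyond citing Lemmas \ref{qk33} and the basic identity $N/a = N\backslash a$ for a loop or coloop $a$.
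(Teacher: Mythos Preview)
Your proposal is correct and follows exactly the same approach as the paper: split into the loop/coloop case (where $N/a \cong N\backslash a \cong M^*(K_{3,3})$) and the non-loop/non-coloop case (where Lemma~\ref{qk33} gives graphicness). The paper's proof is essentially identical, though it does not explicitly spell out the checks that $M^*(K_{3,3})$ is non-graphic and avoids $F_7$, $F_7^*$.
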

\begin{proof}
	Let $a\in E(N)$, where $N$ is a binary matroid such that $N\backslash a \cong M^*(K_{3,3})$.
	(i) If $a$ is a coloop or a loop, then $N\backslash a \cong N/a \cong  M^*(K_{3,3})$. \\
	(ii) If $a$ is not a coloop or loop then by Lemma \ref{qk33}, $N/a$ is graphic. \\
	Thus from above we say that, $M^*(K_{3,3})$ is the only non-graphic elementary quotient of $M^*(K_{3,3})$ not containing $F_7$ and $F_7^*$.
	
\end{proof}

\section{Main Theorems}
In the previous section, we mentioned the graphic and non-graphic quotients for every $F \in \mathcal{F}$. Now, the main theorems are proved in this section.  
\begin{thm}
A cographic binary matroid $M \notin \mathcal{C}_k$, $k \geq 2$, then $M$ contains a minor $P$ such that one of the below is true.\\
i) $P$ is an extension of a minimal minor $E$  by single element, such that $E \notin \mathcal{C}_{k-1}$. \\
ii)  $P= M(Q_i)$.\\
iii) $P$ is a coextension of $M(Q_i)$ by $n$ elements, where $n \leq k$, the Figure \ref{quotient} shows the graph $Q_i$, for $i=1,2, \cdots 9$.    

\end{thm}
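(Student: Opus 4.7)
The plan is to paste together Lemmas \ref{mainlemma} and \ref{rel} with the quotient classifications from Lemmas \ref{qF7*}--\ref{qM*K33}. Since $M\notin\mathcal{C}_k$, there exists $S\subseteq E(M)$ with $|S|=k$ for which $M_S$ is not graphic. Apply Lemma \ref{mainlemma}: either there is a minor $P$ of $M$ extending a minimal minor $E\notin\mathcal{C}_{k-1}$ by one element, which is exactly case (i) of the theorem and requires nothing further, or there is a minor $P$ of $M$ with $P_S\cong F$ or $P_S/S'\cong F$ for some $S'\subseteq S$ and some $F\in\mathcal{F}$. The remainder of the proof is devoted to this second branch.

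In the second branch, strip any coloops and invoke Lemma \ref{rel}: $P$ is either isomorphic to an elementary quotient $Q_F$ of $F$ or is a coextension of $Q_F$ by at most $k$ elements. Because $P$ is a minor of the cographic matroid $M$, $P$ is itself cographic, so by Theorem \ref{cgt} neither $P$ nor $Q_F$ contains $F_7$ or $F_7^*$ as a minor. Thus $Q_F$ satisfies the hypotheses of the quotient lemmas above; combined, the graphic-quotient Lemmas \ref{qF7*}, \ref{qF7}, \ref{gqm*k33}, \ref{gqm*k5} and the non-graphic-quotient Lemmas \ref{ngqF7}, \ref{ngqF7^*}, \ref{qk5}, \ref{qk33}, \ref{qM*K5}, \ref{qM*K33} force $Q_F$ to be one of the nine graphic matroids $M(Q_1),\ldots,M(Q_9)$ of Figure \ref{quotient}, or one of the trivial non-graphic quotients $M^*(K_5)$ or $M^*(K_{3,3})$, the latter only arising when the element $a$ defining $Q_F$ is a loop or coloop of the ambient binary matroid.

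In the graphic subcase, $P\cong Q_F\cong M(Q_i)$ yields case (ii) and any nontrivial coextension of $Q_F$ yields case (iii), so this half of the argument is immediate. The main obstacle is to dispose of the trivial non-graphic subcase where $Q_F\cong F\in\{M^*(K_5),M^*(K_{3,3})\}$; here the quotient is obtained by contracting a loop or coloop, so the ``coextension'' of $Q_F$ is degenerate and adds no real structure. The plan for this step is to observe that $P$ in this situation contains $F$ as a minor, and within $F$ itself one may locate an internal minor isomorphic to some $M(Q_j)$ (or a coextension thereof of permissible size), since both $M^*(K_5)$ and $M^*(K_{3,3})$ are rich enough in graphic minors to cover the nine candidates. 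Replacing $P$ with this alternative minor of $M$ places the resulting object into case (ii) or (iii), completing the proof by exhausting the case analysis sketched above.
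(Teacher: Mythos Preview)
Your proposal is correct and follows essentially the same route as the paper: apply Lemma~\ref{mainlemma} to split into case~(i) versus the quotient case, then invoke Lemma~\ref{rel} and the quotient classifications to land in cases~(ii)/(iii), and finally dispose of the trivial non-graphic quotients $M^*(K_5)$ and $M^*(K_{3,3})$ by locating a graphic $M(Q_j)$ inside them. The only place the paper is more concrete is that last step: rather than asserting that $M^*(K_5)$ and $M^*(K_{3,3})$ are ``rich enough in graphic minors,'' it exhibits specific witnesses via duality---$M^*(Q_1)$ is a minor of $M(K_5)$ (Figure~\ref{M*G1}), hence $M(Q_1)$ is a minor of $M^*(K_5)$, and likewise $M(Q_2)$ is a minor of $M^*(K_{3,3})$ (Figure~\ref{M*G2})---so the non-graphic quotients are absorbed into case~(ii).
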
 
\begin{proof}
	Let a binary cographic matroid be $M$ such that $M \notin \mathcal{C}_k$, $k \geq 2$, that is for $S \subseteq E(M)$, with $|S|=k$, $M_S$ is non-graphic matroid. From Lemma \ref{mainlemma}, $M$ has a minor $P$ with $S \subseteq E(P)$, such that  $P_S \cong F$ or $P_S/S' \cong F$, for some $S'\subseteq S$ and $F \in \mathcal{F}$ or $P$ is an extension of some a minimal minor $E$ by single element, such that $E \notin \mathcal{C}_{k-1}$. If  $P_S \cong F$ or $P_S/S' \cong F$, for some $S'\subseteq S$ then by Lemma \ref{rel}, either $P \cong Q_F$ or $P$ is extension of $Q_F$ by $n$ elements, where $n \leq k$ and $Q_F=N/a$ is a quotient of $F \in \mathcal{F}$.\\
	Case (i) If the quotient is graphic, then 
	a) If $F=F_7^*$, then by Lemma \ref{qF7*},  $Q_{F_7^*} \cong M(Q_1)$ or $Q_{F_7^*} \cong M(Q_2)$. b) If $F=F_7$, then by Lemma \ref{qF7}, $Q_{F_7} \cong M(Q_3)$. If $F=M^*(K_{3,3})$, then by Lemma \ref{gqm*k33}, $Q_{M^*(K_{3,3})} \cong M(Q_4)$ or $Q_{M^*(K_{3,3})} \cong M(Q_5)$. If $F=M^*(K_5)$ then by Lemma \ref{gqm*k5}, $Q_{M^*(K_5)} \cong M(Q_i)$. Figure \ref{quotient} shows the graph $Q_i$, for $i=1,2, \cdots 9$.\\ 
	Case (ii) If the quotient is not graphic, then by Lemma \ref{qM*K5}, a non-graphic quotient $Q_{M^*(K_5)}=M^*(K_5)$ and by Lemma \ref{qM*K33}, a non-graphic quotient $Q_{M^*(K_{3,3})} = M^*(K_{3,3})$. From Figure \ref{M*G1}, $M^*(Q_1)$ is a minor of the matroid $M(K_5)$, thus $M(Q_1)$ is a minor of the matroid $M^*(K_5)$ and From the Figure \ref{M*G2}, $M^*(Q_2)$ is a minor of the matroid $M(K_{3,3})$, thus $M(Q_2)$ is a minor of the matroid $M^*(K_{3,3})$. Hence we discard non-graphic quotients. 
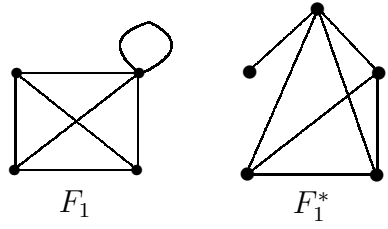
\begin{figure}[h!]
	\unitlength 1mm 
	\linethickness{0.4pt}
	\ifx\plotpoint\undefined\newsavebox{\plotpoint}\fi 
	\begin{picture}(54.902,28.184)(0,0)
		\put(6.184,6.108){\circle*{1.514}}
		\put(22.266,6.079){\circle*{1.514}}
		\put(6.475,18.937){\circle*{1.514}}
		\put(22.558,18.908){\circle*{1.514}}
		\put(6.348,18.942){\line(1,0){16.2}}
		\put(6.281,18.939){\line(0,-1){12.712}}
		\put(22.369,6.001){\line(0,1){13.163}}
		\put(6.373,6.108){\line(1,0){16.271}}
		\multiput(6.278,18.974)(.0418534031,-.0336806283){382}{\line(1,0){.0418534031}}
		\multiput(5.995,6.108)(.0433376963,.0336806283){382}{\line(1,0){.0433376963}}
		\qbezier(22.302,19.168)(17.015,23.556)(23.877,25.693)
		\qbezier(23.877,25.693)(30.402,22.431)(22.527,18.718)
		\put(14.142,1.591){\makebox(0,0)[cc]{$F_1$}}
		\put(36.811,5.499){\circle*{1.599}}
		\put(53.795,5.468){\circle*{1.599}}
		\put(37.12,19.046){\circle*{1.599}}
		\put(54.102,19.016){\circle*{1.599}}
		\put(53.903,5.386){\line(0,1){13.9}}
		\put(37.011,5.499){\line(1,0){17.182}}
		\put(45.995,27.384){\circle*{1.599}}
		\multiput(46.044,27.574)(.03366337,-.036074757){235}{\line(0,-1){.036074757}}
		\multiput(36.895,19.152)(.0365904,.0337392){250}{\line(1,0){.0365904}}
		\multiput(36.658,5.608)(.0337021277,.0779361702){282}{\line(0,1){.0779361702}}
		\multiput(46.036,27.557)(.033642478,-.097247788){226}{\line(0,-1){.097247788}}
		\multiput(36.652,5.46)(.042847791,.0336895522){402}{\line(1,0){.042847791}}
		\put(45.25,1.25){\makebox(0,0)[cc]{$F_1^*$}}
	\end{picture}
	\caption{The Graphs $F_1 \cong Q_1$ and $F_1^* \cong Q_1^*$.}
	\label{M*G1}
	
\end{figure}

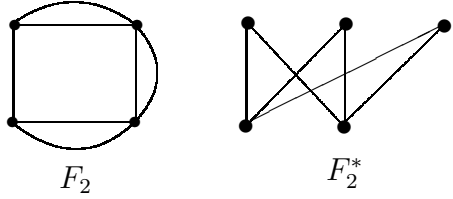
\begin{figure}[h!]
	\unitlength 1mm 
	\linethickness{0.4pt}
	\ifx\plotpoint\undefined\newsavebox{\plotpoint}\fi 
	\begin{picture}(65.824,29.069)(0,0)
		\put(38.873,9.634){\circle*{1.615}}
		\put(51.788,9.4){\circle*{1.615}}
		\put(39.184,23.318){\circle*{1.615}}
		\put(52.1,23.085){\circle*{1.615}}
		\put(38.977,23.32){\line(0,-1){13.56}}
		\put(51.899,9.319){\line(0,1){14.04}}
		\put(65.016,22.883){\circle*{1.615}}
		\multiput(38.873,9.602)(.0337197943,.0347583548){389}{\line(0,1){.0347583548}}
		\multiput(38.873,23.527)(.0337241379,-.036403183){377}{\line(0,-1){.036403183}}
		\multiput(51.587,9.803)(.0342416452,.0337249357){389}{\line(1,0){.0342416452}}
		\put(38.468,9.803){\line(2,1){26.639}}
		\put(8.207,10.149){\circle*{1.514}}
		\put(24.289,10.12){\circle*{1.514}}
		\put(8.499,22.978){\circle*{1.514}}
		\put(24.581,22.949){\circle*{1.514}}
		\put(8.371,22.983){\line(1,0){16.2}}
		\put(8.304,22.979){\line(0,-1){12.713}}
		\put(24.392,10.042){\line(0,1){13.162}}
		\qbezier(8.207,23.203)(16.626,29.069)(24.668,22.825)
		\put(8.396,10.149){\line(1,0){16.271}}
		\qbezier(8.018,10.149)(16.911,2.959)(24.289,10.149)
		\qbezier(24.55,22.775)(30.062,16.475)(24.325,10.175)
		\put(16.5,2.527){\makebox(0,0)[cc]{$F_2$}}
		\put(51.879,3.419){\makebox(0,0)[cc]{$F_2^*$}}
	\end{picture}
	
\caption{The Graphs $F_2 \cong Q_2$ and $F_2^* \cong Q_2^*$}
	\label{M*G2}
\end{figure}
	
	 Thus from above, either the minor $P=Q_F$ or a coextension of $Q_F$ not more than $k$ elements for $F \in \mathcal{F}$. Hence the result.   
\end{proof}
\noindent We now obtain excluded minors for the class $\mathcal{C}_3$. 
\begin{thm}
Let a cographic binary matroid be $M$, then $M \in \mathcal{C}_3$ if and only if $M$ does not have a minor $M(F_i)$, where the Figure \ref{g2g} shows the graph $F_i$, for $i=1,2, \cdots 7$.

\end{thm}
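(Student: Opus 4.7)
The plan is to prove both directions by contrapositive, leveraging Theorem \ref{mt1} as the structural engine for the hard direction and Theorem \ref{gtg3thm} for the easy direction.

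For necessity, suppose $M$ is cographic and contains $M(F_i)$ as a minor for some $i \in \{1,\ldots,7\}$. By Theorem \ref{gtg3thm}, there exists $S \subseteq E(M(F_i))$ with $|S| = 3$ for which $M(F_i)_S$ is non-graphic. Since the splitting operation appends a row supported on $S$ to a binary representation, it commutes with deletion and contraction of elements outside $S$ (for contraction, one pivots on a row other than the new one, which is legal because the splitting row is zero on every column $e \notin S$). So the identification $M(F_i) = M\backslash X/Y$ (with $X, Y$ disjoint from $S$) lifts to $M(F_i)_S = M_S \backslash X / Y$, and consequently $M_S$ inherits $M(F_i)_S$ as a minor and is therefore non-graphic. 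Hence $M \notin \mathcal{C}_3$.

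For sufficiency, assume $M \notin \mathcal{C}_3$ and apply Theorem \ref{mt1} with $k = 3$. This produces a minor $P$ of $M$ that is either (i) a single-element extension of a minimal minor $E \notin \mathcal{C}_2$, (ii) equal to $M(Q_i)$ for some $i$, or (iii) a coextension of $M(Q_i)$ by at most three elements. Since $P$ is a minor of a cographic matroid, $P$ is cographic, and by Theorem \ref{cgt} it contains no minor in $\{F_7, F_7^*, M(K_5), M(K_{3,3})\}$. In case (i), Theorem \ref{ctg2el} forces $E \in \{M(G_1), M(G_2)\}$; I will enumerate the binary single-element extensions of each up to isomorphism, discard those that fail cographicness, and for each remaining extension exhibit an explicit sequence of deletions and contractions producing some $M(F_j)$. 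In cases (ii) and (iii), I will run through $i = 1, \ldots, 9$, first deciding whether $M(Q_i)$ (or a given coextension) is cographic, and then, in the cographic cases, locating an $M(F_j)$ minor inside it.

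The main obstacle is the combinatorial bulk of the case analysis in (i) and (iii). Single-element extensions of $M(G_1)$ and $M(G_2)$, together with coextensions of each $M(Q_i)$ by up to three elements, give rise to many non-isomorphic candidates. The cographicness condition combined with Theorem \ref{cgt} eliminates a substantial fraction of these, and the graph automorphisms of $G_1$, $G_2$ and the graphs $Q_i$ further collapse the number of distinct cases. The remaining bookkeeping, namely verifying for each surviving candidate that it contains some $M(F_j)$ as a minor, is the computational crux and where most of the work of the proof lies.
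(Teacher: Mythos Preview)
Your overall strategy matches the paper's: both directions are argued the same way, and for sufficiency both reduce, via the structural lemmas, to checking that every candidate minor $P$ already contains some $M(F_j)$.

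The substantive difference is in how much bookkeeping you take on. For case~(i), the paper does not enumerate extensions of $M(G_2)$ at all: it simply observes that $M(G_2)$ itself already contains $M(F_2)$ as a minor, so every extension of it does too. Only the single-element extensions of $M(G_1)$ are inspected, and these are quickly seen to be isomorphic to $M(F_4)$ or $M(F_7)$, or to contain $M(F_1)$ or $M(F_2)$. For cases~(ii) and~(iii), the paper does not redo the coextension analysis either: since the graphic quotients $M(Q_1),\dots,M(Q_9)$ are exactly those arising in~\cite{gm}, the coextension-by-at-most-three-elements analysis performed there carries over verbatim and already yields precisely the list $M(F_1),\dots,M(F_7)$. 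The paper also does not route through Theorem~\ref{mt1} but works directly from Lemmas~\ref{mainlemma} and~\ref{rel}, handling the non-graphic quotients $M^*(K_5)$ and $M^*(K_{3,3})$ separately by noting (Figures~\ref{M*G1} and~\ref{M*G2}) that they contain $M(F_1)$ and $M(F_2)$ respectively. So the ``combinatorial bulk'' you identify as the crux is entirely sidestepped by invoking~\cite{gm}; your plan is correct, but it redoes work that the paper simply cites.
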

\begin{proof}
Suppose a cographic matroid $M$ contains minor $M(F_i)$, for $i=1,2, \cdots 7$, then $M \notin \mathcal{C}_3$, the proof is straight forward.\\
Conversely, if $M$ does not contain a minor $M(F_i)$ for $i=1,2, \cdots 7$, then we will prove that $M\in \mathcal{C}_3$. Suppose not, then for some $S\subseteq E(M)$, with $|S|=3$, $M_S$ is not a graphic matroid, then, $M_S$ contains minor $F$, for some $F \in \mathcal{F}$, by Theorem \ref{gt}. Then $M$ contains a minor $P$ containing $S$, By Lemma \ref{mainlemma}, such that  $P_S \cong F$ or $P_S/S' \cong F$, for some $S'\subseteq S$ or $P$ is an extension of circuit matroid of the graph $G_1$ or $G_2$ by single element and the graphs $G_1$, $G_2$ are given in Figure \ref{Fig_gtg_2elt}.
It is observed that an extension of $M(G_1)$ by a single element, either isomorphic to $M(F_4)$ or $M(F_7)$ or contains minor $M(F_1)$ or $M(F_2)$. Also, $M(G_2)$ contains minor $M(F_2)$. Thus $P$ cannot be an extension of $M(G_1)$ or $M(G_2)$ by a single element. \\
Hence, $P_S \cong F$ or $P_S/S' \cong F$. Then by Lemma \ref{rel}, either  $P$ is an extension of $Q_F$ by $n$ elements, where $n \leq 3$ or $P \cong Q_F$. \\
Case (i) If the quotient is graphic. In \cite{gm}, Mundhe et al. obtained forbidden minors from graphic quotients of every $F \in \mathcal{F}$, as given in Theorem \ref{gtg3thm}.
Case (ii) If the quotient is not graphic. Let $F=M^*(K_5)$, then by Lemma \ref{qM*K5}, $Q_{M^*(K_5)} \cong M^*(K_5)$ but from Figure \ref{M*G1}, $M^*(F_1)$ is a minor of $M(K_5)$ and hence $M(F_1)$ is a minor of $M^*(K_5)$. Hence we discard $M^*(K_5)$. Let $F=M^*(K_{3,3})$, then $Q_{M^*(K_{3,3})} \cong M^*(K_{3,3})$, by Lemma \ref{qM*K33}, but from Figure \ref{M*G2},  $M^*(F_2)$ is a minor of $M(K_{3,3})$ and hence $M(F_2)$ is a minor of $M^*(K_{3,3})$. Hence, we discard $M^*(K_{3,3})$. \\
Thus by the case (i) and case (ii), the excluded minor for the class $\mathcal{C}_3$ is the matroid $M(F_i)$, the graph $F_i$ is shown in Figure \ref{g2g}, for $i=1,2, \cdots 7$. 

\end{proof}
\bibliographystyle{amsplain}

\end{document}